 \newtheorem{theorem}{Theorem}[section]
 \newtheorem{corollary}[theorem]{Corollary}
 \newtheorem{lemma}[theorem]{Lemma}
 \theoremstyle{definition}
 \theoremstyle{remark}
 \newtheorem{remark}[theorem]{Remark}
\numberwithin{equation}{section}
\renewcommand{\Re}{\operatorname{Re}}
\renewcommand{\Im}{\operatorname{Im}}
\newcommand{\tr}{\operatorname{tr}}
\newcommand{\eq} [1] {\begin{equation}\label{#1}\quad}
\newcommand{\en} {\end{equation}}
\newcommand{\scal}[1]{\langle#1\rangle}
\newcommand{\norm}[1]{\left\Vert#1\right\Vert}
\newcommand{\abs}[1]{\left\vert#1\right\vert}
\newcommand{\fl}[1]{\left\lfloor#1\right\rfloor}
\newcommand{\diag}{\operatorname{diag}}
\newcommand{\Ker}{\operatorname{Ker}}
\newcommand{\C}{\mathbb{C}}
\newcommand{\R}{\mathbb{R}}
\begin{document}
\title[Numerical ranges  of companion matrices]
{Numerical ranges of companion matrices:\\ flat portions on the boundary}

%----------Author 1
\author[Eldred]{Jeffrey  Eldred}
\address{Department of Physics\\ Indiana University\\ Bloomington, IN 47408}

\email{jseldred@email.wm.edu, jseldred@indiana.edu}

%----------Author 2
\author[Rodman]{Leiba Rodman}
\address{Department of Mathematics\\ College of William and Mary\\
Williamsburg, VA 23187\\ USA}

\email{lxrodm@math.wm.edu}

%----------Author 3
\author[Spitkovsky]{Ilya M. Spitkovsky}
\address{Department of Mathematics\\ College of William and Mary\\
Williamsburg, VA 23187\\ USA}

\email{ilya@math.wm.edu, imspitkovsky@gmail.com}

%----------classification, keywords, date
\subjclass{15A60}

\keywords{numerical range, companion matrix}

%\date{January 1, 2004}

\begin{abstract}
Criterion for  a companion matrix to have a certain number of flat portions
on the boundary of its numerical range is given. The criterion is specialized to the cases of
$3\times 3$ and $4\times 4$ matrices. In the latter case, it is proved that
a $4\times 4$ unitarily irreducible companion matrix cannot have $3$ flat portions on the boundary of its
numerical range. Numerical examples are given to illustrate the main results. \end{abstract}

\maketitle

\section{Introduction}
The numerical range $W(A)$ of an $n\times n$ matrix $A$ is a subset
of the complex plane $\C$ defined as \[ W(A)=\{ \scal{Ax,x}\colon x\in
\C^n, \ \norm{x}=1\},
\]
where $\scal{\cdot, \cdot}$ stands for the standard inner product in $\C^n$.
This set first appeared in classical works by Toeplitz \cite{Toe18} and
Hausdorff \cite{Hau},  and since then has been studied intensively.
Among standard contemporary references are \cite{GusRa} and
\cite[Chapter I]{HJ2}, and all properties of the numerical range we will
be using without proof can be found, e.g., in these two monographs.

Among other things, it is of interest to locate flat portions (if any) on
the boundary $\partial W(A)$ of the numerical range, and in particular
to establish a bound for the number $f(A)$ of such portions for various
matrix classes. If $A$ is {\em unitarily reducible}, that is, unitarily
similar to a block diagonal matrix with at least two diagonal blocks
$A_j$, then $W(A)$ is the convex hull of $W(A_j)$. The flat portions on
$\partial W(A)$ are then bound to emerge, unless one of $W(A_j)$
contains all others. In particular, for normal $A$ the blocks $A_j$ can
be made one-dimensional and $W(A)$ is nothing but the convex hull
of the spectrum $\sigma(A)$.  It is easy to see therefore that $f(A)$ is at
most $n$ for normal $A$.

The picture is trivial for $n=2$: $f(A)=0$ if $A$ is not normal, since
$W(A)$ is then an elliptical disk, $f(A)=1$ for normal $A$ different from
a scalar multiple of the identity, since $W(A)$ is then a line segment,
and $f(\lambda I)=0$. For $n=3$, the classification of possible shapes
of $W(A)$ was given by Kippenhahn (\cite{Ki}, see also more
accessibly \cite{Ki08}). From this classification it easily follows that the
maximal possible $f(A)$ is actually attained by normal $A$, while
$f(A)$ is at most two for non-normal unitarily reducible matrices, and
at most one for unitarily irreducible ones. Constructive descriptions of
$3\times 3$ matrices matrices $A$ with flat portions on $\partial
W(A)$ were obtained in \cite{KRS,RS05}.

The case $n=4$ was undertaken in \cite[Theorem 37]{BS041}, where it
was established the bound 4 is then sharp, while for unitarily
irreducible $4\times 4$ matrix $A$, the number $f(A)$ is at most 3. On
the other hand, for any $n$ there exist $n\times n$ unitarily reducible
matrices $A$ for which $f(A)=2(n-2)$, see Example 38 in \cite{BS041}
(suggested to the authors by C.-K. Li). It is not known, when $n>4$, (i)
whether this delivers the sharp upper bound for $f(A)$ (note that
$2(n-2)=4$ for $n=4$) and (ii) what is the upper bound for unitarily
irreducible $A$.

In this paper we focus on the case when $A$ is a {\em companion
matrix}, that is, \eq{A}
A=\left[\begin{matrix} 0 & 1 &  & \\ & \ddots & \ddots & \\ & & 0 & 1 \\
-a_0 & \ldots & -a_{n-2} & -a_{n-1}
\end{matrix}\right]. \en It is well known that the elements of the last row of (\ref{A})
coincide, up to the sign, with the coefficients of its characteristic
polynomial: \eq{char} \det (A- \lambda
I)=\lambda^n+a_{n-1}\lambda^{n-1}+\cdots+a_0.\en These matrices
were treated in \cite{GauWu07}, where in particular it was established
that for a companion $n\times n$ matrix $A$, $f(A)\leq n$ and all
matrices $A$ with $f(A)=n$ were described. They happen to be
unitarily reducible, and the question of the maximal number of flat
portions for unitarily irreducible companion matrices also remains
open.

In our paper, we further tackle the issue of flat portions
on $\partial W(A)$ for companion matrices $A$. Necessary and sufficient conditions
for such portions to exist are described in Section~2. For arbitrary $n$ they are rather cumbersome, and
(at least in their sufficient part) not easy to check. However, for $n=3,4$ they can be recast into constructively
verifiable criteria, allowing in particular to describe all possible values of $f(A)$. The cases $n=3$ and $n=4$ are treated in Sections 3 and 4, respectively.

\section{Conditions for flat portions existence}

 For convenience of reference, we start with two
statements applicable to arbitrary $n\times n$ matrices $A$. Recall
that $\Re A=\frac{1}{2}(A+A^*)$ and $\Im A=\frac{1}{2i}(A-A^*)$.

\begin{lemma}\label{l:cri}Let $A$ be an $n\times n$ matrix $A$. Then $\partial W(A)$ contains
a vertical flat portion to the right of $W(A)$ if and only if \\ {\em (i)} the
maximal eigenvalue  $\lambda_{\max}$ of $\Re A$ is not simple, and
\\ {\em (ii)} the compression of $\Im A$ (equivalently, of $A$) onto the eigenspace $\frak L$
of $\Re A$ corresponding to $\lambda_{\max}$ is not a scalar
multiple of the identity. \end{lemma}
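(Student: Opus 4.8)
The plan is to pin down the only possible vertical supporting line of $W(A)$ on the right and then to describe its intersection with $W(A)$ as the numerical range of a compression. Write $\mu=\lambda_{\max}$, and recall the sharp form of the spectral estimate: for any self-adjoint $H$ one has $\max\{\scal{Hx,x}\colon\norm{x}=1\}=\lambda_{\max}(H)$, with the maximum attained precisely at the unit vectors lying in the $\lambda_{\max}$-eigenspace of $H$ (immediate from the spectral decomposition of $H$). Applying this to $H=\Re A$ and using $\Re\scal{Ax,x}=\scal{(\Re A)x,x}$ gives $\max\Re W(A)=\mu$, so the line $\ell=\{z\in\C\colon\Re z=\mu\}$ is the rightmost vertical supporting line of $W(A)$. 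Since $W(A)$ is compact and convex, $W(A)\cap\ell$ is a point or a segment, and $\partial W(A)$ contains a vertical flat portion to the right of $W(A)$ if and only if $W(A)\cap\ell$ is a nondegenerate segment.

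First I would compute $W(A)\cap\ell$. By the sharp estimate above, a unit vector $x$ satisfies $\Re\scal{Ax,x}=\mu$ exactly when $x\in\frak L$, and for such $x$ one has $\scal{Ax,x}=\mu+i\scal{(\Im A)x,x}$. Hence, letting $B$ be the compression of $\Im A$ onto $\frak L$, we get $W(A)\cap\ell=\mu+i\,W(B)$. As $B$ is self-adjoint, $W(B)$ is the segment $[\lambda_{\min}(B),\lambda_{\max}(B)]$, which is nondegenerate precisely when $B$ is not a scalar multiple of the identity on $\frak L$.

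Next I would record the two reductions in the statement. The compression of $\Re A$ onto $\frak L$ equals $\mu I_{\frak L}$, so the compression of $A$ onto $\frak L$ equals $\mu I_{\frak L}+iB$; thus $B$ is a scalar multiple of the identity if and only if the compression of $A$ onto $\frak L$ is, which is the ``equivalently, of $A$'' clause of (ii). Finally, if $B$ is not a scalar multiple of the identity then necessarily $\dim\frak L\ge2$, i.e.\ (i) holds; hence the conjunction of (i) and (ii) is equivalent to (ii) alone, and by the previous paragraph this is equivalent to $W(A)\cap\ell$ being a nondegenerate segment, which completes the proof.

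The argument is essentially bookkeeping once the rightmost supporting line is identified; the only point that genuinely needs care is the sharp spectral estimate — that $\scal{(\Re A)x,x}$ attains its maximum $\mu$ at a unit vector if and only if that vector lies in $\frak L$ — since this is precisely what forces $W(A)\cap\ell$ to coincide with $\mu+iW(B)$ rather than merely to contain it, and thereby makes condition (ii) necessary as well as sufficient.
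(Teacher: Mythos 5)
Your proof is correct. Note that the paper does not actually prove this lemma --- it states it as well known, citing \cite{KRS} and \cite{BS041} --- so there is no argument in the text to compare against; what you give is the standard argument, and it is complete: the identification of the rightmost supporting line $\Re z=\lambda_{\max}$, the equality case in $\scal{(\Re A)x,x}\le\lambda_{\max}$ forcing maximizing vectors into $\frak L$, the identity $W(A)\cap\{\Re z=\lambda_{\max}\}=\lambda_{\max}+iW(B)$ for the compression $B$ of $\Im A$ onto $\frak L$, and the observation (also made in the paper's remark after the lemma) that (ii) implies (i), are all handled correctly.
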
 This lemma is well known and
was used, e.g., in \cite{KRS,BS041}.

Formally speaking, (i) follows from (ii), but we prefer (i) to be stated
explicitly since it is the condition addressed in Theorem \ref{th:nec}
below.

\begin{lemma}\label{l:red}Let $A$ be an $n\times n$ matrix $A$. Suppose that $\Re A$
has an eigenvalue $\lambda$ of multiplicity bigger that $\fl{n/2}$
while the compression of $\Im A$ onto the corresponding eigenspace
$\frak L$ of $\Re A$ is a scalar multiple of the identity. Then $A$ is
unitarily reducible.\end{lemma}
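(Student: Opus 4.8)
The plan is to bring $A$ to a convenient $2\times2$ block form relative to $\frak L\oplus\frak L^\perp$ and then to exhibit a proper nonzero reducing subspace by a rank count. Set $m=\dim\frak L$, so that $m>\fl{n/2}$ by hypothesis, and let $\mu\in\R$ be the real scalar with compression of $\Im A$ onto $\frak L$ equal to $\mu I$. Since $\frak L$ is an eigenspace of the Hermitian matrix $\Re A$, it reduces $\Re A$; combining this with the scalar-compression hypothesis, the block decomposition of $A$ with respect to $\frak L\oplus\frak L^\perp$ takes the form
\[
A=\begin{bmatrix}(\lambda+i\mu)I_m & X\\ Y & Z\end{bmatrix},\qquad
\Re A=\begin{bmatrix}\lambda I_m & 0\\ 0 & \Re Z\end{bmatrix},
\]
where $X$ maps $\frak L^\perp$ into $\frak L$. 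Equating the $(1,2)$ blocks in $\Re A=\frac12(A+A^*)$ forces $X+Y^*=0$, i.e. $Y=-X^*$.

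The heart of the matter is a dimension estimate for the coupling block $X$, which is $m\times(n-m)$. The assumption $m>\fl{n/2}$ means precisely that $n-m<m$, and hence $\operatorname{rank}X\le n-m<m$. Therefore
\[
\frak N:=\{\,x\in\frak L:\ X^*x=0\,\}=\frak L\ominus\operatorname{range}X
\]
(recall $\operatorname{range}X\subseteq\frak L$) is a subspace of $\frak L$ with $\dim\frak N=m-\operatorname{rank}X\ge 2m-n\ge 1$.

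It remains to verify that $\frak N$ reduces $A$. Viewing $x\in\frak N$ as the vector $(x,0)\in\frak L\oplus\frak L^\perp$ and using $Y=-X^*$ together with $X^*x=0$, one computes directly that $Ax=(\lambda+i\mu)x$ and $A^*x=(\lambda-i\mu)x$; in particular both $A$ and $A^*$ map $\frak N$ into itself, so $\frak N$ is a reducing subspace of $A$. It is nonzero by the count above, and it is a proper subspace of $\C^n$ unless $m=n$. If $m=n$ then $\Re A=\lambda I$ and $\Im A=\mu I$, so $A=(\lambda+i\mu)I$ is scalar, which for $n\ge2$ is trivially unitarily reducible. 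In every case $A$ is unitarily reducible.

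The one place where genuine content enters is the strict inequality $n-m<m$ forced by $m>\fl{n/2}$: it is exactly what prevents $\frak N$ from collapsing to $\{0\}$. Everything else --- the block form, the identity $Y=-X^*$, and the two short matrix--vector computations --- is routine, and the degenerate case $m=n$ needs only the observation that a scalar matrix is (trivially) unitarily reducible.
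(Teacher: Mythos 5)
Your proof is correct, and it is essentially the paper's argument in block-matrix clothing: the paper normalizes $\lambda$ and the scalar $\mu$ to zero by passing to $A-zI$ and then uses the same dimension count $\dim\frak L>n-\dim\frak L$ to find a nonzero $x\in\frak L$ killed by the coupling to $\frak L^\perp$, i.e.\ a joint eigenvector of $A$ and $A^*$ giving a reducing subspace. No substantive difference, so nothing further to add.
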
 \begin{proof} Passing from $A$ to
$A-zI$ with an appropriate choice of $z\in\C$, we may without loss of
generality suppose that $\lambda=0$ and the compression of $\Im A$
onto $\frak L:=\Ker\Re A$ is the zero operator. But the latter condition
means simply that $(\Im A)\frak L\perp\frak L$.  Since $2\dim{\frak
L}>n$, this is only possible if $\Im A$ is not injective on $\frak L$, that
is, $\frak L$ contains a non-zero vector $x$ from $\Ker\Im A$. Then $x$
is an eigenvector for both $\Re A$ and $\Im A$ (equivalently, for both
$A$ and $A^*$), which makes it a normally splitting eigenvector for
$A$, and $A$ itself --- unitarily reducible into $1\times 1$ and
$(n-1)\times (n-1)$ blocks.
\end{proof}

\begin{remark}\label{Rem}For $n=3$ condition on
$\lambda$ in Lemma ~\ref{l:red} merely means that this is not a
simple eigenvalue. Consequently, for unitarily irreducible $3\times 3$
matrices condition (ii) of Lemma~\ref{l:cri} can be dropped. This
observation was also used in \cite{KRS,BS041}. \end{remark}

For companion matrices, a constructive criterion of unitary reducibility is known. It was obtained in
\cite[Section 1]{GauWu04} and can be summarized as follows.

\begin{lemma}\label{l:redu}An $n\times n$ companion matrix is unitarily reducible if and only if
$\sigma(A)=\{\eta\omega_j\colon j\in J_1\}\cup \{\overline{\eta}^{-1}\omega_j\colon j\in J_2\}$ for some
$\eta\in\C\setminus\{0\}$ and partition $J_1\cup J_2$ of $\{1,\ldots,n\}$, where both $J_1$ and $J_2$ are
non-empty; $\omega_1,\ldots,\omega_n$ being
the set of all $n$th roots of $1$. If this condition holds, then $A$ is unitarily similar to $A_1\oplus A_2$, with
$\sigma(A_1)=\{\eta\omega_j\colon j\in J_1\}$, $\sigma(A_2)=\{\overline{\eta}^{-1}\omega_j\colon j\in J_2\}$. The matrix $A$ is unitary if $\abs{\eta}=1$, and $A_1, A_2$ are unitarily irreducible otherwise. \end{lemma}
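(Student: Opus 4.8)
The plan is to reduce unitary reducibility of $A$ to an orthogonality relation among its eigenvectors, and that relation to the vanishing of a geometric series.

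Recall that a companion matrix is nonderogatory. Suppose first that the eigenvalues $\lambda_1,\dots,\lambda_n$ of $A$ are pairwise distinct; then $A$ is diagonalizable, its eigenvector for $\lambda_k$ is the Vandermonde vector $v_k=(1,\lambda_k,\dots,\lambda_k^{\,n-1})^T$, and every $A$-invariant subspace is the span of some of the $v_k$. Hence a subspace reduces $A$ precisely when it equals $\operatorname{span}\{v_k\colon k\in J\}$ and has orthogonal complement $\operatorname{span}\{v_k\colon k\notin J\}$ for some proper nonempty $J\subseteq\{1,\dots,n\}$, i.e.\ precisely when $\scal{v_k,v_l}=0$ for all $k\in J$, $l\notin J$. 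Since $\scal{v_k,v_l}=\sum_{i=0}^{n-1}(\lambda_k\ov{\lambda_l})^{\,i}$, which equals $0$ exactly when $(\lambda_k\ov{\lambda_l})^n=1$ and $\lambda_k\ov{\lambda_l}\neq1$, one obtains: $A$ is unitarily reducible iff there is a proper nonempty $J$ for which every ``cross product'' $\lambda_k\ov{\lambda_l}$ ($k\in J$, $l\notin J$) is a nontrivial $n$th root of unity.

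Next I would identify this with the spectral condition and read off the extra assertions. For necessity, fix $k_0\in J$ and $l_0\notin J$ and set $\eta:=\lambda_{k_0}$; dividing $\lambda_k\ov{\lambda_{l_0}}$ by $\lambda_{k_0}\ov{\lambda_{l_0}}$ shows $\lambda_k/\eta$ is an $n$th root of $1$ for each $k\in J$, and likewise $\ov\eta\lambda_l$ is an $n$th root of $1$ for each $l\notin J$, so $\sigma(A)=\{\eta\omega_j\colon j\in J_1\}\cup\{\ov\eta^{-1}\omega_j\colon j\in J_2\}$ with $J_1\cap J_2=\emptyset$ (a common index $j$ would make the cross product $\eta\omega_j\cdot\ov{\ov\eta^{-1}\omega_j}$ equal to $1$), and $|J_1|+|J_2|=n$, so $J_1\cup J_2=\{1,\dots,n\}$. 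For sufficiency, given $\sigma(A)$ of this form, let $J$ consist of the indices of the $\eta\omega_j$; each cross product is then $\omega_a\omega_b^{-1}$ with $a\neq b$, a nontrivial $n$th root of unity, so the orthogonality holds, $\operatorname{span}\{v_k\colon\lambda_k\in\eta\{\omega_1,\dots,\omega_n\}\}$ reduces $A$, and $A$ is unitarily similar to $A_1\oplus A_2$ with the stated spectra (these blocks need not themselves be companion matrices). If $|\eta|=1$ then $\ov\eta^{-1}=\eta$ and $\sigma(A)=\eta\{\omega_1,\dots,\omega_n\}$, so the characteristic polynomial is $\lambda^n-\eta^n$ with $|\eta^n|=1$, whose companion matrix is the phase-weighted cyclic shift $e_i\mapsto e_{i+1}$, $e_n\mapsto\eta^n e_1$ — a unitary. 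If $|\eta|\neq1$, decompose $A$ into unitarily irreducible blocks $B_1\oplus\dots\oplus B_t$; applying the criterion to the reduction ``$B_i$ against the rest'' gives $|\lambda_k\lambda_l|=1$ for eigenvalues $\lambda_k,\lambda_l$ lying in different blocks, and with three blocks this forces the moduli of the eigenvalues involved to be $1$, hence $|\eta|=1$, a contradiction; so $t=2$, and $|\eta|^2\neq1$ likewise rules out two eigenvalues of the same ``type'' lying in different blocks, so one block carries exactly $\{\eta\omega_j\colon j\in J_1\}$ and the other $\{\ov\eta^{-1}\omega_j\colon j\in J_2\}$, both unitarily irreducible.

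I expect the real work to be the case of a repeated eigenvalue, where $A$ is no longer diagonalizable. Here one notes first that the spectral form in the statement always consists of $n$ distinct points (if $\eta\omega_j=\ov\eta^{-1}\omega_{j'}$ with $j\in J_1$, $j'\in J_2$, then $|\eta|^2=\omega_{j'}\omega_j^{-1}$, forcing $|\eta|=1$ and $j=j'$, impossible), so in this case the biconditional reduces to showing that a companion matrix with a multiple eigenvalue is unitarily irreducible. The approach I would take is to replace the Vandermonde eigenvectors by the confluent Vandermonde chains $v_k,v_k',\dots$ obtained by differentiating $v(\lambda)=(1,\lambda,\dots,\lambda^{n-1})^T$ at $\lambda_k$ (the generalized eigenvectors of $A$), together with the analogous chains built from $A^*$; these form a block-biorthogonal system, so a reducing subspace again forces two such spans to coincide, and comparing their Plücker coordinates (the maximal minors) produces, besides the relations $(\lambda_k\ov{\lambda_l})^n=1$, further relations carrying the binomial coefficients $\binom{i}{r}$ that turn out to be mutually inconsistent as soon as a multiplicity exceeds $1$. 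Pinning down that inconsistency uniformly, rather than case by case, is the step I would expect to cost the most effort; the rest is bookkeeping.
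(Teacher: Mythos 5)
The paper itself does not prove this lemma: it is quoted from Gau and Wu \cite[Section 1]{GauWu04}, so your attempt has to stand on its own. The part you actually carry out is correct: with simple spectrum, the eigenvectors of the companion matrix are the Vandermonde vectors, every invariant subspace is a span of some of them, the Gram entries are geometric sums, and your bookkeeping with the cross products $\lambda_k\ov{\lambda_l}$ correctly yields both directions of the spectral criterion, the unitarity when $\abs{\eta}=1$, and (via the ``three blocks force unimodular eigenvalues'' argument) the irreducibility of $A_1,A_2$ when $\abs{\eta}\neq 1$. This is a legitimate, elementary route, different in flavor from Gau--Wu's structural analysis of companion matrices.

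The genuine gap is the necessity direction when $A$ has a multiple eigenvalue. Since the spectral form in the statement always consists of $n$ distinct points, the lemma implicitly asserts that a companion matrix with a repeated eigenvalue is unitarily irreducible, and nothing in your diagonalizable analysis touches this: a priori a companion matrix with characteristic polynomial $(\lambda-\alpha)^2q(\lambda)$ might split off a non-normal $2\times 2$ block with double eigenvalue $\alpha$, and your orthogonality computation does not apply because the eigenvectors no longer span $\C^n$. Your proposed remedy --- confluent Vandermonde chains for $A$ and $A^*$, comparison of maximal minors, and an alleged inconsistency among binomial-coefficient relations --- is only a plan: the claim that a reducing subspace ``forces two such spans to coincide'' needs an argument (the relevant fact is that $M$ is spanned by initial segments of Jordan chains of $A$ while $M^\perp$ is spanned by initial segments of chains of $A^*$, and these must be complementary), and the decisive inconsistency is precisely the content to be proven, not bookkeeping. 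As it stands, the ``only if'' statement is established only under the unproven hypothesis that $\sigma(A)$ is simple, so the proof is incomplete exactly where the cited Gau--Wu argument does real work.
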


We are now ready to state the necessary condition for the flat portion existence.

\begin{theorem}\label{th:nec}Let $A$ be given by {\em (\ref{A})}. Then for $W(A)$
to have a flat portion on the boundary it is necessary that \eq{cond1}
\sum_{j=0}^{n-2}a_j\omega^{n-j}\sin\frac{\pi(j+1)}{n}=\sin\frac{\pi}{n}
\en and \eq{cond2}\Re(a_{n-1}\omega)=
\sum_{j=2}^{n-1}\frac{\abs{\gamma_j}^2}{\cos\frac{\pi}{n}-\cos\frac{\pi
j}{n}}-\cos\frac{\pi}{n} \en for some $\omega$ with $\abs{\omega}=1$
and\eq{gamma} \gamma_j=\frac{1}{\sqrt{2n}}\left(\sin\frac{\pi
j(n-1)}{n}-\sum_{k=0}^{n-2}a_{k}\omega^{n-k}\sin\frac{\pi
j(k+1)}{n}\right). \en If conditions {\em (\ref{cond1}), (\ref{cond2})}
hold, then the potential flat portion passes through the point
$\overline{\omega}\cos\frac{\pi}{n}$ and has the slope
$\frac{\pi}{2}-\arg\omega$. \end{theorem}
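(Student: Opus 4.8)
The plan is to rotate the picture so that the flat portion becomes vertical and on the right, apply Lemma~\ref{l:cri}, and then exploit the fact that the leading $(n-1)\times(n-1)$ principal submatrix of $\Re(\omega A)$ is, up to a unitary similarity, half the adjacency matrix of a path — whose spectrum $\{\cos\frac{\pi j}{n}\colon j=1,\dots,n-1\}$ is explicit. Concretely: since $W(\omega A)=\omega\,W(A)$ for $\abs\omega=1$, a flat portion on $\partial W(A)$ becomes, after multiplying $A$ by an appropriate unimodular scalar $\omega$, a flat portion of $\partial W(\omega A)$ which is vertical and lies to the right; as the rightmost support line of $W(\omega A)$ is $\{\Re z=\lambda_{\max}(H)\}$ with $H:=\Re(\omega A)$, undoing the rotation ($W(A)=\overline{\omega}\,W(\omega A)$) shows that the flat portion of $W(A)$ lies on the image of that line under $z\mapsto\overline{\omega}z$, i.e.\ on the line through $\overline{\omega}\,\lambda_{\max}(H)$ with slope $\tfrac\pi2-\arg\omega$. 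By part (i) of Lemma~\ref{l:cri} such a portion exists only if $\lambda_{\max}(H)$ is not simple, so it remains to show that \emph{$\lambda_{\max}(H)$ non-simple forces $\lambda_{\max}(H)=\cos\tfrac\pi n$ together with} (\ref{cond1}) \emph{and} (\ref{cond2}).

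For this I would use the decomposition $A=S-e_na^*$, where $S$ is the nilpotent shift ($Se_k=e_{k-1}$, $Se_1=0$) and $a^*=(a_0,\dots,a_{n-1})$, which gives
\[ H=\tfrac12\bigl(\omega S+\overline{\omega} S^*\bigr)-\tfrac12\bigl(\omega e_na^*+\overline{\omega}\,ae_n^*\bigr)=\left[\begin{matrix} M & b\\ b^* & d\end{matrix}\right], \]
with $M$ the $(n-1)\times(n-1)$ tridiagonal matrix having zero diagonal, $\tfrac12\omega$ just above and $\tfrac12\overline{\omega}$ just below it. A diagonal unitary similarity carries $M$ to the real tridiagonal matrix with $\tfrac12$ off the diagonal, so $M$ has the simple eigenvalues $\cos\tfrac{\pi j}{n}$ ($j=1,\dots,n-1$) with orthonormal eigenvectors $u_j=\sqrt{2/n}\,\bigl(\overline{\omega}^{\,k}\sin\tfrac{\pi jk}{n}\bigr)_{k=1}^{n-1}$; in particular $c:=\cos\tfrac\pi n$ is the top eigenvalue of $M$ and $\Ker(cI_{n-1}-M)=\C u_1$. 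By Cauchy interlacing $\lambda_{\max}(H)\ge c$ while the second largest eigenvalue of $H$ is $\le c$; consequently $\lambda_{\max}(H)$ is non-simple \emph{if and only if} $cI-H\ge0$ and $\dim\Ker(cI-H)\ge2$, and in that case $\lambda_{\max}(H)=c=\cos\tfrac\pi n$, which already fixes the location of the flat portion.

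It then remains to apply the generalized Schur complement to $cI-H$, whose blocks are $cI_{n-1}-M$, $-b$, $-b^*$, $c-d$. Since $cI_{n-1}-M\ge0$ with one-dimensional kernel $\C u_1$, the matrix $cI-H$ is positive semidefinite with a kernel of dimension at least two precisely when $b\perp u_1$ and $c-d=b^*(cI_{n-1}-M)^{+}b$. Reading off the entries of $b$ (namely $-\tfrac12\overline{\omega}\,\overline{a_{k-1}}$ for $k\le n-2$ and $\tfrac12\omega-\tfrac12\overline{\omega}\,\overline{a_{n-2}}$ for $k=n-1$) and pairing $b$ with $u_j$ yields $\scal{b,u_j}=\omega^{n}\overline{\gamma_j}$, with $\gamma_j$ exactly as in (\ref{gamma}) — the isolated $\sin\tfrac{\pi j(n-1)}{n}$ coming from the boundary entry $b_{n-1}$. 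Hence $b\perp u_1$ is the vanishing of $\gamma_1$, which is (\ref{cond1}); and expanding $(cI_{n-1}-M)^{+}$ in the basis $\{u_j\}$ (the mode $j=1$ being absent, as it spans the kernel) gives $b^*(cI_{n-1}-M)^{+}b=\sum_{j=2}^{n-1}\abs{\gamma_j}^2\bigl(\cos\tfrac\pi n-\cos\tfrac{\pi j}{n}\bigr)^{-1}$, so, since $d=(H)_{nn}=-\Re(a_{n-1}\omega)$, the relation $c-d=b^*(cI_{n-1}-M)^{+}b$ is exactly (\ref{cond2}).

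The main obstacle is the computational bookkeeping in the last step: one must treat the Moore--Penrose inverse of the \emph{singular} matrix $cI_{n-1}-M$ with care (this is also why the sum in (\ref{cond2}) runs only over $j=2,\dots,n-1$), and then push through the trigonometric identities for the vectors $\bigl(\sin\tfrac{\pi jk}{n}\bigr)_k$ and the powers of $\omega$ so as to match $\scal{b,u_j}$ with the precise normalization in (\ref{gamma}) and to pin down the signs in (\ref{cond1}) and (\ref{cond2}). The rotation, the block decomposition, and the interlacing argument are routine, and the word ``potential'' reflects only that, once (\ref{cond1}) and (\ref{cond2}) hold, an actual flat portion on $\{\Re z=c\}$ still requires in addition part (ii) of Lemma~\ref{l:cri}.
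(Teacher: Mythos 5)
Your proposal is correct and follows essentially the same route as the paper: rotate by $\omega$, invoke part (i) of Lemma~\ref{l:cri}, use the explicit spectrum $\{\cos\frac{\pi j}{n}\}$ and eigenvectors of the tridiagonal corner block together with Cauchy interlacing to pin the abscissa at $\cos\frac{\pi}{n}$, and identify the border entries in the eigenbasis with the $\gamma_j$ of (\ref{gamma}) (your identity $\scal{b,u_j}=\omega^n\overline{\gamma_j}$ checks out and matches the paper's arrowhead form (\ref{H})). The only deviations are cosmetic: you apply the diagonal unitary only to the corner block instead of passing to the rotated companion matrix $B$, and you extract the multiplicity-two condition via the generalized Schur complement of $\cos\frac{\pi}{n}I-H$ rather than the paper's two arrowhead-determinant computations — both yield exactly (\ref{cond1}) and (\ref{cond2}).
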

\begin{proof}Observe first of all that for any $\omega$ with absolute value one the
matrix $\omega A$, while not being companion itself (for
$\omega\neq 1$), is nevertheless unitarily similar to a companion
matrix \[
B=\left[\begin{matrix} 0 & 1 &  & \\ & \ddots & \ddots & \\ & & 0 & 1 \\
-b_0 & \ldots & -b_{n-2} & -b_{n-1}
\end{matrix}\right], \]
where  $b_j=a_j\omega^{n-j}$: $ \omega A= \Omega^{-1}B\Omega$
with \eq{Omega} \Omega=\diag [1,\omega,\ldots,\omega^{n-1}].\en
Consequently,
\[ W(A)=\overline{\omega}W(\omega A)=\overline{\omega}W(B).\]
It therefore suffices to show that conditions
(\ref{cond1}), (\ref{cond2}) with $\omega=1$ are necessary for
$\partial W(A)$ to contain a vertical line segment located to the right
of $W(A)$, and that this line segment (if it exists) passes through the
real point $\cos\frac{\pi}{n}$.

Let us show that (\ref{cond1}), (\ref{cond2}) can be interpreted as
condition (i) of Lemma~\ref{l:cri} for $A$ given by (\ref{A}).

Due to the interlacing property of eigenvalues of hermitian matrices,
$\lambda_{\max}$ will be the maximal eigenvalue of all $(n-1)\times
(n-1)$ principal submatrices of $A$. For $A$ given by (\ref{A}), \eq{reA}
\Re A=\frac{1}{2} \iffalse \left[\begin{matrix} 0 & 1 & &  & -\overline{a_0} \\ 1 & \ddots & \ddots &  & \vdots \\
 &\ddots  & \ddots  & 1 &  \vdots \\ &  & 1 & 0 & 1-\overline{a_{n-2}} \\
-a_0 & \ldots & \ldots & 1-a_{n-2} & -2\Re a_{n-1}
\end{matrix}\right]\fi \left[\begin{matrix} T & \vline & \begin{matrix}  -\overline{a_0} \\ \vdots \\
 -\overline{a_{n-3}} \\ 1-\overline{a_{n-2}}\end{matrix}  \\ \hline
\begin{matrix} -a_0 & \ldots & -a_{n-3} & 1-a_{n-2}\end{matrix}  &  \vline & -2\Re a_{n-1}
\end{matrix}\right] ,  \en
where $T$ is the $(n-1)\times (n-1)$  tridiagonal matrix with zeros on
the main diagonal and ones on two side diagonals:
\[ T= \left[\begin{matrix} 0 & 1 & &   \\ 1 & \ddots & \ddots &   \\
 &\ddots  & \ddots  & 1  \\ &  & 1 & 0
\end{matrix}\right]. \] The eigenvalues and the eigenvectors of $T$ are well known. Namely (see, e.g., \cite{HaHa92} or \cite[Section 2.2]{BGru}),
\[ Tv_j=2\cos\frac{\pi j}{n}v_j,\quad j=1,\ldots, n-1, \] where
\begin{equation}\label{may131} v_j=\left[\sin\frac{\pi j}{n},\ldots, \sin\frac{\pi j(n-1)}{n}\right]^T.
\end{equation}  So, the abscissa of the potential vertical flat portion is indeed $\cos\frac{\pi}{n}$.
On the other hand, the left upper $(n-1)\times (n-1)$ block of $A$ is
the Jordan cell $J_{n-1}$, so that $W(A)\supset W(J_{n-1})$. In its turn,
$W(J_{n-1})=\{z\colon \abs{z}\leq \cos\frac{\pi}{n} \}$ (see, e.g.,
\cite{HaHa92}), so that the above mentioned flat portion should be
passing through the real point $\cos\frac{\pi}{n}$.

As it is stated in \cite{HaHa92} (and can also be checked via a routine
trigonometrical calculation), $\norm{v_j}^2=n/2$ for all $j$. Therefore,
the matrix \begin{equation} \label{may132}
V=\sqrt{\frac{2}{n}}\left[\sin\frac{\pi jk}{n}\right]_{k,j=1}^{n-1}
\end{equation} is an hermitian (actually, real symmetric) involution which
diagonalizes $T$: \[ T=2V\diag\left[\cos\frac{\pi}{n}, \ldots,
\cos\frac{\pi(n-1)}{n}\right]V. \] Consequently, matrix (\ref{reA})
is unitarily similar to \eq{H} H=\left[\begin{matrix}\begin{matrix} \cos\frac{\pi}{n} & & \\ & \ddots & \\
& & \cos\frac{\pi(n-1)}{n}\end{matrix} & \vline & \begin{matrix}  \overline{\gamma_1} \\ \vdots \\
 \overline{\gamma_{n-1}} \end{matrix}  \\ \hline
\begin{matrix} \gamma_1 & \ldots & \gamma_{n-1} \end{matrix}  &  \vline & -\Re a_{n-1}
\end{matrix}\right], \en where \[ \gamma_j=\frac{1}{\sqrt{2n}}\left(\sin\frac{\pi
j(n-1)}{n}-\sum_{k=0}^{n-2}a_{k}\sin\frac{\pi j(k+1)}{n}\right) \]
(observe that the latter formula is the particular case of (\ref{gamma})
for $\omega=1$).

From (\ref{H}) it is easily seen that  \[
\det\left(H-\cos\frac{\pi}{n}I\right)=-\abs{\gamma_1}^2\prod_{j=2}^{n-1}\left(\cos\frac{\pi
j}{n}-\cos\frac{\pi}{n}\right). \] Thus, $\cos\frac{\pi}{n}$ is an
eigenvalue of $H$ (and therefore of $\Re A$) if and only if
$\gamma_1=0$. This coincides with (\ref{cond1}) in which
$\omega=1$.

The multiplicity of $\cos\frac{\pi}{n}$ as an eigenvalue of $\Re A$
cannot exceed 2, since the matrix $H-\cos\frac{\pi}{n}I$ contains a
non-singular $(n-2)\times (n-2)$ submatrix \[ \diag\left[\cos\frac{\pi
j}{n}-\cos\frac{\pi}{n}\right]_{j=2}^{n-1}.\] In order for this multiplicity
to equal 2 it is necessary and sufficient that, in addition to
$\gamma_1=0$, the right lower $(n-1)\times (n-1)$ submatrix of
$H-\cos\frac{\pi}{n}I$,
\[ \left[\begin{matrix}\begin{matrix} \cos\frac{2\pi}{n}-\cos\frac{\pi}{n} & & \\ & \ddots & \\
& & \cos\frac{\pi(n-1)}{n}-\cos\frac{\pi}{n}  \end{matrix} & \vline & \begin{matrix}  \overline{\gamma_2} \\ \vdots \\
 \overline{\gamma_{n-1}} \end{matrix}  \\ \hline
\begin{matrix} \gamma_2 & \ldots & \gamma_{n-1} \end{matrix}  &  \vline & -\Re a_{n-1}-\cos\frac{\pi}{n}
\end{matrix}\right], \]
is singular.  This is an arrowhead matrix, the determinant of which can
be computed by an easy induction and equals \[
\left(\sum_{j=2}^{n-1}\frac{\abs{\gamma_j}^2}{\cos\frac{\pi}{n}-\cos\frac{\pi
j}{n}}-\Re a_{n-1}-\cos\frac{\pi}{n}\right)\cdot\prod_{j=2}^{n-1}
\left(\cos\frac{\pi j}{n}-\cos\frac{\pi}{n}\right).\] Thus, it equals zero if
and only if (\ref{cond2}) holds (once again, with $\omega=1$).
\end{proof} Note that necessity of condition (\ref{cond1}) in a slightly
different way was established in \cite{GauWu07}, see Lemma~3 there.

It follows from Lemma~\ref{l:redu} and Theorem~\ref{th:nec} that in the generic case matrices (\ref{A}) are
unitarily irreducible and have no flat portions on the boundary. Namely, the set of companion matrices for
which (\ref{cond1}) has no unimodular solutions is open and dense within the set of all companion matrices.
The openness of this set is clear from continuity of roots of algebraic equations as functions of the
equations' coefficients. As for denseness, assume $a_0\neq 0$, and let
$$ \left(\sum_{j=0}^{n-2}a_j\omega^{n-j}\sin\frac{\pi(j+1)}{n}\right)- \sin\frac{\pi}{n}
=a_0 (\omega - \omega_1)\cdots (\omega - \omega_n),
 $$
where $\omega_1,\ldots ,\omega_n$ are all the roots of (\ref{cond1}) counted with multiplicities.
Note that $\omega_1\cdots \omega_n=(-1)^{n+1}a_0^{-1} \sin\frac{\pi}{n}$ and
$\sum_{j=1}^n \omega_j^{-1}=0$. We now perturb $\omega_1,\ldots, \omega_n$ slightly resulting in
$\omega_1', \ldots \omega_n'$ respectively such that
none of the $\omega_j'$s is unimodular and the equality
$\sum_{j=1}^n (\omega_j')^{-1}=0$ holds. Clearly such perturbation is possible. Now define
$a_0',\ldots ,a_n'$ by the equalities
$$ \omega_1'\cdots \omega_n'=(-1)^{n+1}(a_0')^{-1} \sin\frac{\pi}{n} $$
and
$$ \left(\sum_{j=0}^{n-2}a_j'\omega^{n-j}\sin\frac{\pi(j+1)}{n}\right)- \sin\frac{\pi}{n}
=a_0' (\omega - \omega_1')\cdots (\omega - \omega_n').
 $$
As a result, a companion matrix is obtained, as close as we wish to $A$, for which the corresponding
equation (\ref{cond1}) has no unimodular solutions.

A specific subclass of unitarily irreducible companion matrices with no flat portions on
the boundary of their numerical ranges is delivered by the following

\begin{corollary}\label{c:zeros}Let $A$ be given by {\em (\ref{A})} with \eq{zeros} a_0=\cdots=a_{n-2}=0.\en Then $A$ is unitarily irreducible and $W(A)$ has no flat portions on the boundary.\end{corollary}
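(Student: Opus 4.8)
The plan is to dispatch the two claims of Corollary~\ref{c:zeros} separately, each being an immediate consequence of the material already assembled in this section.

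For the unitary irreducibility I would invoke Lemma~\ref{l:redu}. Under the hypothesis (\ref{zeros}) the characteristic polynomial (\ref{char}) factors as $\lambda^{n-1}(\lambda+a_{n-1})$, so $0\in\sigma(A)$ (with multiplicity $n-1$, or $n$ when $a_{n-1}=0$; the distinction is irrelevant). On the other hand every number $\eta\omega_j$ or $\overline{\eta}^{-1}\omega_j$ occurring in the spectral description of Lemma~\ref{l:redu} is nonzero, because $\eta\ne 0$ and the $n$th roots of unity $\omega_j$ are nonzero. Hence $\sigma(A)$ cannot be written in the form required there, and so $A$ is unitarily irreducible.

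For the absence of flat portions I would appeal to Theorem~\ref{th:nec}. Substituting (\ref{zeros}) into the left-hand side of (\ref{cond1}), the whole sum $\sum_{j=0}^{n-2}a_j\omega^{n-j}\sin\frac{\pi(j+1)}{n}$ vanishes identically in $\omega$, so (\ref{cond1}) would force $\sin\frac{\pi}{n}=0$, which is impossible for $n\ge 2$. Thus (\ref{cond1}) admits no unimodular solution, and Theorem~\ref{th:nec} then gives that $\partial W(A)$ carries no flat portion. Alternatively one could bypass Theorem~\ref{th:nec} and argue straight from the matrix $H$ of its proof: under (\ref{zeros}) one has $\gamma_1=\frac{1}{\sqrt{2n}}\sin\frac{\pi}{n}\ne 0$ for every unimodular $\omega$, so $\cos\frac{\pi}{n}$ is not an eigenvalue of $\Re(\omega A)$ at all; since, by the interlacing argument, a vertical flat portion to the right of $W(\omega A)$ would have to pass through $\cos\frac{\pi}{n}$, no such portion exists for any $\omega$.

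I do not anticipate a genuine obstacle here. The only two points deserving a moment's care are that the $\omega$-dependent terms in (\ref{cond1}) all disappear under (\ref{zeros}), so that (\ref{cond1}) fails \emph{uniformly} over the rotated matrices $\omega A$ and hence rules out a flat portion in \emph{every} direction, and that $0$ always lies in $\sigma(A)$ under (\ref{zeros}), whereas the spectra admissible in Lemma~\ref{l:redu} never contain $0$.
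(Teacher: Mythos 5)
Your proposal is correct and follows essentially the same route as the paper: irreducibility because $0\in\sigma(A)$ (i.e.\ $A$ is singular) is incompatible with the nonzero spectra allowed by Lemma~\ref{l:redu}, and absence of flat portions because under (\ref{zeros}) condition (\ref{cond1}) degenerates to $0=\sin\frac{\pi}{n}$, which has no solution, so Theorem~\ref{th:nec} applies. The extra details you supply (the factored characteristic polynomial, the uniformity in $\omega$) only make explicit what the paper leaves implicit.
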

Indeed, such $A$ are singular, and therefore (as follows from Lemma~\ref{l:redu}) unitarily irreducible. On the other hand, equation (\ref{cond1}) takes the form $0=\sin\frac{\pi}{n}$ and thus has no solutions.

Note that if, in addition to (\ref{zeros}), also $a_{n-1}=0$, then $A$ is simply a nilpotent Jordan block,
with $W(A)$ being a circular disk. If $a_{n-1}\neq 0$, the numerical range of $A$ cannot be circular according
to \cite[Theorem 1]{GauWu07}, but still there will be no flat portions on $\partial W(A)$. An example
illustrating this, more interesting, situation when $n=4$, will be given in Section~\ref{4}.

Of course,  a criterion for the flat portion existence can be formulated
by imposing
 condition (ii) of Lemma~\ref{l:cri} (interpreted for the case of companion matrices) on matrices satisfying
 Theorem~\ref{th:nec}. \begin{theorem}\label{th:cri}Let conditions
 {\em (\ref{cond1}), (\ref{cond2})} hold for some matrix $A$ given by {\em (\ref{A})} and $\omega$ having
absolute value 1.
 Then $\partial W(A)$ has a flat portion passing through $\overline{\omega}\cos\frac{\pi}{n}$ if and only if
 at least one of the scalar products $\scal{\Im(\omega A)x_1,x_2}$ and $\scal{\Im(\omega A)x_2,x_2}$ differs from zero. Here
 \eq{x} x_1=\Omega^{-1}\left[\begin{matrix}v_1\\ 0\end{matrix}\right], \quad
 x_2=\Omega^{-1}\left[\begin{matrix}V & 0\\ 0 & 1\end{matrix}\right]\xi,
 \en with $\Omega$, $v_1$ and $V$ given by {\em (\ref{Omega})}, {\em (\ref{may131})} and {\em
(\ref{may132})} respectively,
\[ \xi=[0,\xi_2,\ldots,\xi_{n-1},1]^T \text{ and }
\xi_j=\frac{\overline{\gamma_j}}{\cos\frac{\pi}{n}-\cos\frac{\pi j}{n}},
\quad j=2,\ldots, n-1.\] \end{theorem}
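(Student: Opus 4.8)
The plan is to turn the claim into condition~(ii) of Lemma~\ref{l:cri} and then to exploit a special orthogonality built into the vectors $x_1,x_2$. Exactly as in the proof of Theorem~\ref{th:nec}, write $\omega A=\Omega^{-1}B\Omega$ with $\Omega$ as in~(\ref{Omega}) unitary and $B$ the companion matrix with coefficients $b_j=a_j\omega^{n-j}$; then $W(A)=\overline\omega W(B)$, $\Re(\omega A)=\Omega^{-1}(\Re B)\Omega$ and $\Im(\omega A)=\Omega^{-1}(\Im B)\Omega$. Under this rotation the flat portion of $\partial W(A)$ described in Theorem~\ref{th:nec} (through $\overline\omega\cos\frac{\pi}{n}$, slope $\frac{\pi}{2}-\arg\omega$) becomes the vertical flat portion of $\partial W(B)$ lying to the right and passing through the real point $\cos\frac{\pi}{n}$. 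By the computation already carried out in the proof of Theorem~\ref{th:nec}, conditions~(\ref{cond1})--(\ref{cond2}) for this $\omega$ say precisely that $\cos\frac{\pi}{n}$ is an eigenvalue of $\Re B$ of multiplicity $2$; since the $(n-1)\times(n-1)$ leading principal submatrix of $\Re B$ is $\frac{1}{2}T$, whose largest eigenvalue is the \emph{simple} value $\cos\frac{\pi}{n}$, Cauchy interlacing forces $\lambda_{\max}(\Re B)=\cos\frac{\pi}{n}$. Thus part~(i) of Lemma~\ref{l:cri} holds automatically, and the flat portion exists if and only if part~(ii) does.

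Next I identify the eigenspace $\mathfrak L$ of $\Re(\omega A)$ at $\cos\frac{\pi}{n}$. From~(\ref{H}), $\Re B$ is unitarily similar, via the real symmetric involution $U=\left[\begin{matrix}V&0\\0&1\end{matrix}\right]$, to $H$. Because $\gamma_1=0$ by~(\ref{cond1}), the vector $e_1$ is an eigenvector of $H$ at $\cos\frac{\pi}{n}$; writing out $H\xi=\cos\frac{\pi}{n}\,\xi$ coordinate by coordinate, the entries $\xi_j$ for $2\le j\le n-1$ are forced to be exactly those in the statement and the remaining (last) equation is precisely~(\ref{cond2}), so $\xi$ is a second eigenvector. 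As the multiplicity is $2$, the eigenspace of $H$ is $\operatorname{span}\{e_1,\xi\}$; applying $\Omega^{-1}U$ and using that $U e_1=\sqrt{2/n}\left[\begin{matrix}v_1\\0\end{matrix}\right]$ is the first column of $V$, we obtain $\mathfrak L=\operatorname{span}\{x_1,x_2\}$ with $x_1,x_2$ as in~(\ref{x}), the factor $\sqrt{2/n}$ being irrelevant. Since $\Omega^{-1}$ and $U$ are unitary and $e_1\perp\xi$, we also get $x_1\perp x_2$.

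Now, by part~(ii) of Lemma~\ref{l:cri} together with the unitary equivalence $\Im(\omega A)=\Omega^{-1}(\Im B)\Omega$, the flat portion fails to exist exactly when the compression of $\Im(\omega A)$ onto $\operatorname{span}\{x_1,x_2\}$ is a scalar multiple of the identity. In the orthonormal basis $x_1/\norm{x_1}$, $x_2/\norm{x_2}$ this compression is a Hermitian $2\times2$ matrix whose $(1,1)$ entry vanishes: indeed, writing $y=\left[\begin{matrix}v_1\\0\end{matrix}\right]$ so that $x_1=\Omega^{-1}y$, unitarity of $\Omega$ gives $\scal{\Im(\omega A)x_1,x_1}=\scal{(\Im B)y,y}$, and since $y$ has zero last coordinate only the leading $(n-1)\times(n-1)$ block of $\Im B$ (namely $\Im J_{n-1}$) contributes, so this equals $\scal{\Im(J_{n-1})v_1,v_1}$, which is $0$ because $\Im(J_{n-1})$ is $i$ times a real skew-symmetric matrix while $v_1$ is real. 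A Hermitian $2\times2$ matrix with a zero diagonal entry is scalar precisely when it is the zero matrix; hence the compression is scalar if and only if $\scal{\Im(\omega A)x_1,x_2}=0$ and $\scal{\Im(\omega A)x_2,x_2}=0$. Negating this equivalence gives the stated criterion.

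The main obstacle is the bookkeeping of the second step: correctly threading the two unitary conjugations, by $U$ and by $\Omega^{-1}$, so as to land exactly on the formulas~(\ref{x}) for $x_1$ and $x_2$, together with the verification of the vanishing identity $\scal{\Im(\omega A)x_1,x_1}=0$. It is precisely this identity that allows the criterion to involve only the two scalar products listed in the theorem rather than all three independent entries of the $2\times2$ compression.
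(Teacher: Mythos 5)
Your proof is correct and follows essentially the same route as the paper: identify the eigenspace of $\Re(\omega A)$ at $\cos\frac{\pi}{n}$ via the arrowhead matrix $H$ as the span of $x_1,x_2$, observe $\scal{\Im(\omega A)x_1,x_1}=0$ so that the compression is scalar iff it is zero, and invoke Lemma~\ref{l:cri}. You merely make explicit some steps the paper leaves implicit (the interlacing argument showing $\cos\frac{\pi}{n}$ is the maximal eigenvalue, and the coordinate check that $\xi$ is an eigenvector of $H$).
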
  \begin{proof}Under conditions of
Theorem~\ref{th:nec} (and in the notation of its proof), vectors
$[1,0,\ldots,0]^T (\in\C^n)$ and $\xi$ form a basis of $\Ker
\left(H-\cos\frac{\pi}{n}I\right)$. Consequently, (\ref{x}) delivers a
basis for ${\frak L}=\Ker \left(\Re A-\cos\frac{\pi}{n}I\right)$. Since \[
\scal{\Im (\omega A)x_1,x_1}=\scal{(\Im J_{n-1})v_1,v_1}=0, \] the
compression of $\Im (\omega A)$ onto $\frak L$ is a scalar multiple of
the identity if and only if it equals zero. This is equivalent to
$\scal{\Im(\omega A)x_1,x_2}=\scal{\Im(\omega A)x_2,x_2}=0$. It
remains to invoke Lemma~\ref{l:cri}. \end{proof}

Thus the number of flat portions on the boundary of the numerical
range of the matrix (\ref{A}) coincides with the number of distinct
solutions $\omega$ of (\ref{cond1}), (\ref{cond2}) for which
$|\omega|=1$ and the ``if and only if"  conditions of Theorem
\ref{th:cri} are satisfied.

\section{\boldmath{$3 \times 3$}  matrices}

As was mentioned in the Introduction, the case
$n=2$ is trivial, and there is no need to consider companion $2\times
2$ matrices \eq{A2} \left[\begin{matrix}0 & 1\\ -a_0 &
-a_1\end{matrix}\right] \en separately. Nevertheless note that
conditions (\ref{cond1}), (\ref{cond2}) in this case amount to \[
a_0\omega^2=0, \quad \Re(a_1\omega)=0.\] They hold
if and only if $\abs{a_0}=1$ and $2\arg a_1-\arg a_0=\pi$ (the latter
condition being redundant when $a_1=0$).  These are exactly the
requirements for (\ref{A2}) to be normal, as it should be.

We move therefore to the case $n=3$.

\begin{theorem}\label{th:3}Let $A$ be a $3\times 3$ companion matrix:
\[ A=\left[ \begin{matrix}0 & 1 & 0 \\ 0 & 0 & 1\\ -a_0 & -a_1& -a_2\end{matrix}\right]. \]
Then $\partial W(A)$ contains a flat portion if and only if the equation
\eq{3cond1} a_0\omega^3+a_1\omega^2=1 \en  has a solution
$\omega$ with $\abs{\omega}=1$ in addition satisfying \eq{3cond2}
2\Re(a_2\omega)=\abs{a_0}^2-1,\en and the triple $a_0,a_1,a_2$
differs from \eq{cond3} a_0=-2\zeta^3,\ a_1=3\zeta^2\overline{w}, \
a_2=\frac{3}{2}\zeta w, \en where $\abs{\zeta}=1$ and $w$ is any
cube root of $1$.
\end{theorem}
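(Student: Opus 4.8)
The plan is to obtain Theorem~\ref{th:3} by specializing to $n=3$ the general statements of the preceding section. The first point is to check that conditions (\ref{cond1}), (\ref{cond2}) of Theorem~\ref{th:nec} collapse, for $n=3$, exactly to (\ref{3cond1}), (\ref{3cond2}). In (\ref{cond1}) both nonzero terms carry the common factor $\sin\frac{\pi}{3}$, so (\ref{cond1}) is $(a_0\omega^{3}+a_1\omega^{2})\sin\frac{\pi}{3}=\sin\frac{\pi}{3}$, i.e.\ (\ref{3cond1}). In (\ref{cond2}) only $\gamma_2$ occurs and $\cos\frac{\pi}{3}-\cos\frac{2\pi}{3}=1$, while a one-line manipulation of (\ref{gamma}) using (\ref{3cond1}) gives $\gamma_2=-\frac{1}{\sqrt2}\,a_0\omega^{3}$, so that $\abs{\gamma_2}^2=\frac12\abs{a_0}^2$ and (\ref{cond2}) turns into (\ref{3cond2}). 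Hence Theorem~\ref{th:nec} already supplies the necessity of [(\ref{3cond1}), (\ref{3cond2}) holding for some unimodular $\omega$] for $\partial W(A)$ to have a flat portion, and locates the potential portion on the line through $\ov{\omega}\cos\frac{\pi}{3}$.

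Next, assume (\ref{3cond1}), (\ref{3cond2}) hold for some $\omega$ with $\abs{\omega}=1$. As in the proof of Theorem~\ref{th:nec}, replace $A$ by the companion matrix $B$ with $b_j=a_j\omega^{3-j}$, so that $W(A)=\ov{\omega}\,W(B)$; since a flat portion on $\partial W(A)$ becomes, for a suitable $\omega$, a vertical flat portion to the right of $W(B)$ and conversely, it suffices to decide, for each $\omega$, when $\partial W(B)$ has such a portion. The computations in that proof identify (\ref{3cond1}), (\ref{3cond2}) with the assertion that $\cos\frac{\pi}{3}$ is a \emph{double} eigenvalue of $\Re B$, necessarily equal to $\lambda_{\max}(\Re B)$, so hypothesis (i) of Lemma~\ref{l:cri} holds; by Lemma~\ref{l:cri} the flat portion is present if and only if the compression of $\Im B$ onto the two-dimensional eigenspace $\frak L=\Ker(\Re B-\cos\frac{\pi}{3}I)$ is \emph{not} a scalar multiple of the identity. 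By Lemma~\ref{l:red} with $n=3$ (where $\fl{n/2}=1<2$), equivalently by Remark~\ref{Rem}, when this compression is scalar $B$, hence $A$, is unitarily reducible. So all that is left is to describe the triples $(a_0,a_1,a_2)$ for which (\ref{3cond1}), (\ref{3cond2}) are solvable yet the compression is scalar for every admissible $\omega$, and to show these carry no flat portion.

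I would identify that exceptional set as the family (\ref{cond3}). One route is computational: apply Theorem~\ref{th:cri} with $n=3$, where $x_1,x_2$ of (\ref{x}) are vectors in $\C^3$ that become elementary once $\gamma_1=0$ is used, and solve ``$\scal{\Im(\omega A)x_1,x_2}=\scal{\Im(\omega A)x_2,x_2}=0$'' simultaneously with (\ref{3cond1}), (\ref{3cond2}); the solution variety is (\ref{cond3}). A more structural route uses the reducibility just obtained: $B$ is unitarily similar to $[\mu]\oplus B'$ with $B'$ unitarily irreducible, so $W(B')$ is a genuine ellipse; a scalar compression at $\lambda_{\max}(\Re B)$ forces the support line $\Re z=\cos\frac{\pi}{3}$ to meet $\ov{W(B)}$ in a single point, which, since $\ov{W(B)}$ contains the disk $\{\abs{z}\le\cos\frac{\pi}{3}\}$, must be the real number $\cos\frac{\pi}{3}=\frac12$; thus $\mu=\frac12$ and $\mu$ is the rightmost point of $W(B')$. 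Feeding $\abs{\mu}=\frac12\neq1$ into the spectral description in Lemma~\ref{l:redu} forces $\sigma(B)=\{\frac12,\,-1\pm i\sqrt{3}\}$ (the two remaining eigenvalues coming out to $-1\pm i\sqrt{3}$ regardless of which cube-root multiple of the parameter is taken for the $1\times1$ summand), i.e.\ $B$ is the companion matrix of $\lambda^{3}+\frac32\lambda^{2}+3\lambda-2$; undoing the rotation puts $(a_0,a_1,a_2)$ into the form (\ref{cond3}), the cube root $w$ there being a redundant reparametrization. Conversely, for a triple (\ref{cond3}) one checks that (\ref{3cond1}) has a unique unimodular root, that (\ref{3cond2}) holds at it, and --- factoring the cubic, invoking Lemma~\ref{l:redu}, and using that $\sigma(\Re B)=\{\frac12,\frac12,-\frac52\}$ together with the symmetry about the real axis of the ellipse with conjugate foci $-1\pm i\sqrt{3}$ --- that its rightmost point is the real number $\frac12=\mu$, so $W(B)=W(B')$ is an ellipse with no flat portion at all. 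Putting the three steps together gives Theorem~\ref{th:3}.

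The main obstacle is this last step: showing that ``the compression is scalar'' cuts out \emph{precisely} the one-parameter family (\ref{cond3}) and nothing more. In the computational approach this is the elimination of $\omega$ and the recognition of the resulting variety; in the structural approach it is the careful bookkeeping with Lemma~\ref{l:redu} (which eigenvalue is the $1\times1$ block, why the parameter has modulus $\neq1$, why the rest of the spectrum is forced) together with the geometric fact that a scalar compression at $\lambda_{\max}(\Re B)$ means the ``right edge'' of $W(B)$ has degenerated to a single corner located at an eigenvalue. The first step is a trigonometric identity, and the second is a routine application of Lemmas~\ref{l:cri} and~\ref{l:red} once $\omega$ has been normalized away, so neither should present difficulty.
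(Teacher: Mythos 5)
Your proposal is correct in substance, and it shares the paper's skeleton --- necessity from Theorem~\ref{th:nec} specialized to $n=3$ (your simplification $\gamma_2=-a_0\omega^3/\sqrt2$ is exactly the paper's), sufficiency for unitarily irreducible $A$ via Remark~\ref{Rem}, and Lemma~\ref{l:redu} for the reducible case --- but your treatment of the crucial step, pinning down (\ref{cond3}) as the exact exceptional family, is genuinely different. The paper never reasons through the compression in the reducible case: it writes the spectrum as $\eta\omega_1,\eta\omega_2,\ov{\eta}^{-1}\omega_3$, expresses $a_0,a_1,a_2$ by Vieta in terms of $r=\abs{\eta}$, uses (\ref{3cond2}) to force $r\le 2$, and then compares the major axis $\sqrt{1+r^2+r^4}$ of the elliptical block's numerical range with the sum of focal distances $2\sqrt{1+r^2+r^{-2}}$ from the remaining eigenvalue: for $r<2$ that eigenvalue lies outside the ellipse (two flat portions, three when $r=1$), while equality at $r=2$ puts it on the ellipse and is exactly (\ref{cond3}). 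You instead use Lemma~\ref{l:cri}(ii) directly: a scalar compression means the support line $\Re z=\frac{1}{2}$ meets $W(B)$ in a single point, necessarily the real point $\frac{1}{2}$, so the $1\times1$ summand is $\mu=\frac{1}{2}$, and Lemma~\ref{l:redu} plus Vieta then force $\sigma(B)=\{\frac{1}{2},-1\pm i\sqrt3\}$, i.e.\ (\ref{cond3}) after undoing the rotation; conversely, for (\ref{cond3}) the eigenvalue $\frac{1}{2}$ is the rightmost point of the ellipse (your computation $\sigma(\Re B)=\{\frac12,\frac12,-\frac52\}$ checks out), so $W(A)$ is an ellipse and carries no flat portion. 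The paper's route buys, in addition, the exact count of flat portions (two or three) in the non-exceptional reducible cases; yours buys a shorter identification of the exception and a transparent geometric reason for it (the one-dimensional eigenvalue sits at the tangency point). The alternative ``computational route'' through Theorem~\ref{th:cri} is left as an unexecuted claim, but nothing in your argument rests on it.

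One point needs patching in the forcing step: ``thus $\mu=\frac{1}{2}$'' presupposes $\Re\mu=\frac{1}{2}$, i.e.\ you must exclude that the double eigenvalue $\frac{1}{2}$ of $\Re B$ comes entirely from the $2\times2$ summand, and your appeal to ``$B'$ unitarily irreducible'' is, per Lemma~\ref{l:redu}, only guaranteed when $B$ is not unitary --- which you learn only afterwards from $\abs{\mu}=\frac{1}{2}$. Both cases are dispatched by nonderogatoriness of companion matrices: in either scenario the eigenspace $\frak L$ reduces $B$ (it lies inside the $2\times2$ summand, respectively is spanned by eigenvectors of the normal matrix $B$), so a scalar compression of $\Im B$ on $\frak L$ would make $B|_{\frak L}$ scalar, giving $B$ an eigenvalue of geometric multiplicity two, which is impossible. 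With that line added, your structural argument is complete.
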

\begin{proof}{\sl Necessity} of (\ref{3cond1}), (\ref{3cond2})  follows directly from Theorem~\ref{th:nec}. Indeed, (\ref{cond1})
for $n=3$ takes the form (\ref{3cond1}), while (\ref{gamma}) for $n=3$
and $j=2$ yields \[
\gamma_2=\frac{1}{2\sqrt{2}}\left(-1-a_0\omega^3+a_1\omega^2\right).
\] Taking (\ref{3cond1}) into consideration, we conclude further that $\gamma_2=-a_0\omega^3/\sqrt{2}$.
Based on this observation, (\ref{cond2}) with $n=3$ turns into
(\ref{3cond2}).

{\sl Sufficiency.} Conditions (\ref{3cond1}), (\ref{3cond2}), being the
$3\times 3$ version of (\ref{cond1}), (\ref{cond2}), guarantee that the
maximal eigenvalue of $\Re (\omega A)$ is not simple. By
Remark~\ref{Rem}, for $3\times 3$ matrices this implies the existence
of a flat portion on $\partial W(A)$ (with a slope
$\frac{\pi}{2}-\arg\omega$) provided that $A$ is unitarily irreducible. It
remains therefore to consider the case of unitarily reducible $A$.

According to Lemma~\ref{l:redu} in the case $n=3$,
the eigenvalues of a unitarily reducible $A$ are $\lambda_1=\eta\omega_1$,
$\lambda_2=\eta\omega_2$, $\lambda_3=\omega_3/\overline{\eta}$,
with $\omega_j$ corresponding to the three cube roots of unity (in no particular
order) and some non-zero $\eta$. Moreover, $A$ is then unitarily
similar to the orthogonal sum of a $2\times 2$ block $A_2$ with the
eigenvalues $\lambda_1,\lambda_2$ and the $1\times 1$ block
$A_1=\{\lambda_3\}$.

Letting $\abs{\eta}=r$ and $\arg\eta=\theta$, we therefore conclude
from Vieta's formulas that
\begin{align*} a_0 & = & -\lambda_1\lambda_2\lambda_3 & = & -\eta^2/\overline{\eta} & = & -re^{3i\theta}, \\
a_1 & = & \lambda_1\lambda_2+\lambda_1\lambda_3+\lambda_2\lambda_3
& =  & \eta^2\omega_1\omega_2+\eta(\omega_1+\omega_2)\omega_3/\overline{\eta}  & = &
(r^2-1)e^{2i\theta}\overline{\omega_3}, \\
a_2 & = & -(\lambda_1+\lambda_2+\lambda_3) & = & -\eta(\omega_1+\omega_2)-\omega_3/\overline{\eta} & = &
\frac{r^2-1}{r}e^{i\theta}\omega_3. \end{align*} If $r=1$, then
$a_1=a_2=0$, $\abs{a_0}=1$, so that (\ref{3cond2}) is a tautology
while (\ref{3cond1}) has three equidistant solutions $\omega$ on the
unit circle. The matrix $A$ is in this case unitary, and $W(A)$ has three
flat portions on the boundary.

On the other hand, (\ref{3cond2}) implies that
$2\abs{a_2}\geq\abs{a_0}^2-1$, that is, \[ 2\frac{\abs{r^2-1}}{r}\geq
\abs{r^2-1}. \] If $r\neq 1$, this is only possible when $r\leq 2$.

Due to the unitary similarity of $A$ and $A_1\oplus A_2$, the
numerical range $W(A)$ is the convex hull of $\lambda_3$ and
$W(A_2)$. The latter, in its turn, is the ellipse with the foci at
$\lambda_1,\lambda_2$ and the major axis of the length
\begin{multline*}
\sqrt{\tr(A_2^*A_2)-\abs{\lambda_1}^2-\abs{\lambda_2}^2+\abs{\lambda_1-\lambda_2}^2}=\\
\sqrt{\tr(A^*A)-\abs{\lambda_1}^2-\abs{\lambda_2}^2-\abs{\lambda_3}^2+\abs{\lambda_1-\lambda_2}^2}=\\
\sqrt{2+\abs{a_0}^2+\abs{a_1}^2+\abs{a_2}^2-2r^2-r^{-2}+3r^2}=\sqrt{1+r^2+r^4},
\end{multline*} while \[ \abs{\lambda_1-\lambda_3}+\abs{\lambda_2-\lambda_3}=\abs{r\omega_1-r^{-1}\omega_3}+
\abs{r\omega_2-r^{-1}\omega_3}=2\sqrt{1+r^2+r^{-2}}. \] But
\[ 2\sqrt{1+r^2+r^{-2}}>\sqrt{1+r^2+r^4} \text{ for } 0<r<2,  \] while for $r=2$ the equality is attained.
 Consequently, the point $\lambda_3$ lies outside the ellipse
$W(A_2)$, and their convex hull has two flat portions on the boundary,
unless $r=2$. It remains to observe that the case $r=2$ corresponds
exactly to the exception (\ref{cond3}).
\end{proof}
Note that Example~6 in \cite{GauWu07} is a particular case of
(\ref{cond3}) corresponding to $\zeta=w=1$.

Companion $3\times 3$ matrices with elliptical numerical ranges
were treated in \cite{Calb08}, based on the tests proposed in
\cite{KRS}. According to Kippenhahn's classification (see \cite{Ki08}),
for irreducible $3\times 3$ companion matrices $A$ not satisfying
conditions of \cite{Calb08} or our Theorem~\ref{th:3}, $W(A)$ has
ovular shape.

We remark that (\ref{3cond2}) is a tautology if $a_2=0$, $\abs{a_0}=1$,
it has no unimodular solutions if $2\abs{a_0}<\abs{\abs{a_0}^2-1}$, and
its (automatically unimodular) solutions are given by \[
\frac{\abs{a_0}^2-1\pm i\sqrt{4\abs{a_2}^2-(\abs{a_0}^2-1)^2}}{2a_2}
\] in the remaining case $0\neq 2\abs{a_2}\geq\abs{\abs{a_0}^2-1}$.
So, conditions (\ref{3cond1}), (\ref{3cond2}) can be recast as follows:
either
\begin{equation} \label{may135} a_2=0, \ |a_0|=1,\text{ and }  a_0\omega^3+ a_1\omega^2=1 \text{ for some unimodular }
\omega , \end{equation} or
\begin{equation}\label{may133} a_2\neq 0, \qquad 2\abs{a_2}\geq\abs{\abs{a_0}^2-1}\end{equation} and
\begin{multline}\label{may134}  a_0\left(\abs{a_0}^2-1+i\kappa
\sqrt{4\abs{a_2}^2-(\abs{a_0}^2-1)^2}\right)^3\\
+2a_1a_2\left(\abs{a_0}^2-1+i\kappa
\sqrt{4\abs{a_2}^2-(\abs{a_0}^2-1)^2}\right)^2=8a_2^3
\end{multline}
for some choice of $\kappa=\pm 1$.
\bigskip
\iffalse
Note that in the real
parameter space $\R^6$ where the real and imaginary parts of
$a_0,a_1,a_2$ serve as the parameters, conditions (\ref{may133}) and
(\ref{may134}) are represented by a $3$-dimensional surface, whereas
(\ref{may135}), resp. (\ref{cond3}), is represented by a
$2$-dimensional surface, resp. one-dimensional curve. Thus, in
$\R^6$ the set of all companion matrices with a flat portion on
$\partial W(A)$ is represented by a union of a $3$-dimensional
surface and a $2$-dimensional surface from which a curve is excised.
\fi

{\sl Example 1.} Let
$a_0=2+i$, $a_1=-1-i$, and $a_2=2+3i$ so that we have: \eq{3flat}
A=\left[\begin{matrix} 0 & 1 & 0 \\ 0 & 0 & 1 \\
-2 -i & 1+i & -2-3i
\end{matrix}\right]. \en Then (\ref{3cond2}) holds with $\omega=1$, the exception (\ref{cond3})
does not hold, and (\ref{3cond1}) has only one unimodular solution:
$\omega_{1}=1$.  Thus, the matrix $A$ given by (\ref{3flat}) has one
(vertical) flat portion on the boundary of its numerical
range. As such, this $A$ is automatically unitarily irreducible.
The respective $W(A)$ is pictured in Figure~1
below\footnote{ All numerical ranges are plotted using
the program by C.~Cowen and E.~Harel, available at
\url{http://www.math.iupui.edu/~ccowen/Downloads/33NumRange.html}.}:

\begin{figure}[here]
\centering
\includegraphics[scale=.35]{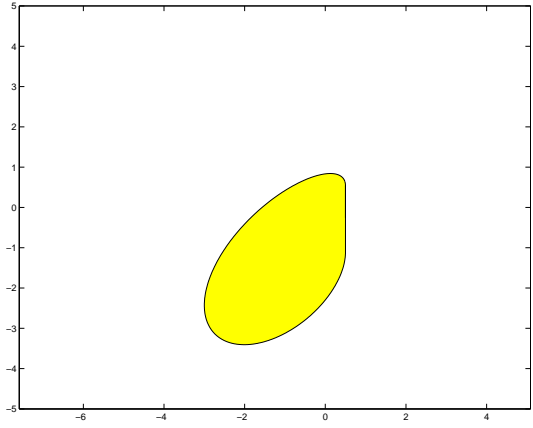}
\caption{Numerically calculated plot of the numerical range of $A$, as given by
(\ref{3flat}).}
\end{figure}

\newpage
\section{\boldmath{$4 \times 4$} matrices}\label{4}

In this section, we consider the case $n=4$, that is, \eq{com4} A=
\left[\begin{matrix}0 &
1 & 0 & 0\\
0 & 0 & 1 & 0 \\ 0 & 0 & 0 & 1\\ -a_0 & -a_1 & -a_2 &
-a_3\end{matrix}\right], \en  After some simple algebra, equations
(\ref{cond1}) and (\ref{cond2}) take the form
\begin{equation}\label{may136}
a_0\omega^4+\sqrt{2}a_1\omega^3+a_2\omega^2=1 \end{equation}
and
\begin{equation}\label{may137}
\sqrt{2} \Re (a_3\omega)=2|\gamma_2|^2+|\gamma_3|^2 -1,
\end{equation}
respectively. On the other hand, a computation shows that the
$\gamma_j$'s defined by (\ref{gamma}) with $n=4$ are given by the
formulas
$$  \gamma_2=\frac{1}{2\sqrt{2}} (-1-a_0\omega^4+a_2\omega^2), \quad  \gamma_3=\frac{1}{4}
(1-a_0\omega^4+\sqrt{2}a_1\omega_3-a_2\omega^2), $$ which
simplify further by using (\ref{may136}) to
\begin{equation}\label{may140} \gamma_2=-\frac{a_1\omega^3}{2}-\frac{a_0\omega^4}{\sqrt{2}},
\qquad \gamma_3=\frac{1}{\sqrt{2}} a_1\omega^3.
\end{equation}
Substitute (\ref{may140}) for $\gamma_2$ and $\gamma_3$ in the
right hand side of (\ref{may137}) to yield
\begin{equation}\label{may141}
{\sqrt{2}} \Re((a_3-a_0\overline{a_1})\omega)=
|a_0|^2+|a_1|^2 -1. \end{equation} So, the system of
equations (\ref{cond1}), (\ref{cond2}) is equivalent to the system
(\ref{may136}), (\ref{may141}).

Similarly to the situation for $n=3$, (\ref{may141}) is a tautology if
\eq{tau4} a_3=a_0\overline{a_1},
\quad\abs{a_0}^2+\abs{a_1}^2=1,\en it has no unimodular solutions if
$\sqrt{2}\abs{a_3-a_0\overline{a_1}}<\abs{\abs{a_0}^2+\abs{a_1}^2-1}$,
and its (automatically unimodular) solutions are given by \eq{oh}
\frac{|a_0|^2+|a_1|^2-1 \pm i \sqrt{2
\abs{a_3-a_0\overline{a_1}}^2-(|a_0|^2+|a_1|^2-1)^2}}{\sqrt{2}(a_3-a_0\overline{a_1})}\en
in the remaining case \[ 0\neq \abs{a_3-a_0\overline{a_1}}
\geq\frac{1}{\sqrt{2}}\abs{\abs{a_0}^2+\abs{a_1}^2-1}. \] We thus
obtain the following: \begin{corollary}\label{co:f4} Let $A$ be given by
{\em (\ref{com4})}.  Then for $W(A)$ to have a flat portion on the
boundary it is necessary that \[ \abs{a_3-a_0\overline{a_1}}
\geq\frac{1}{\sqrt{2}}\abs{\abs{a_0}^2+\abs{a_1}^2-1} \]  and {\em
(\ref{may136})}  has a unimodular solution $\omega$. Moreover, this
$\omega$ must coincide with one of the values given by {\em
(\ref{oh})}, unless {\em (\ref{tau4})} holds, and corresponds to the flat
portion (if any) with the slope $\frac{\pi}{2}-\arg\omega$.
\end{corollary}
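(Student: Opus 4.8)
The plan is to read the corollary off directly from Theorem~\ref{th:nec} together with the elementary analysis of equation~(\ref{may141}) carried out in the paragraph preceding the corollary. First I would invoke Theorem~\ref{th:nec}: for $\partial W(A)$ to contain a flat portion it is necessary that (\ref{cond1}), (\ref{cond2}) hold for some $\omega$ with $\abs{\omega}=1$, and for $n=4$ the equivalence obtained above turns these into (\ref{may136}) and (\ref{may141}). Since (\ref{may136}) is already in the desired form, the whole task reduces to describing which unimodular $\omega$ can solve (\ref{may141}), and then to quoting the slope from the last sentence of Theorem~\ref{th:nec}.

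Next I would analyze (\ref{may141}) as a constraint on the unit circle. Abbreviate $c=a_3-a_0\overline{a_1}$ and $d=\abs{a_0}^2+\abs{a_1}^2-1$ (a real number), so that (\ref{may141}) reads $\sqrt{2}\,\Re(c\omega)=d$. If $c=0$, a unimodular solution exists if and only if $d=0$, which together with $c=0$ is exactly (\ref{tau4}); in that case (\ref{may141}) holds for \emph{every} unimodular $\omega$, so $\omega$ is constrained only by (\ref{may136}). If $c\neq 0$ and $\abs{\omega}=1$, then $\abs{c\omega}=\abs{c}$, so writing $c\omega=d/\sqrt{2}+is$ with $s\in\R$ gives $d^2/2+s^2=\abs{c}^2$; this is solvable precisely when $\abs{c}\ge \abs{d}/\sqrt{2}$, i.e. when $\abs{a_3-a_0\overline{a_1}}\ge \frac{1}{\sqrt{2}}\abs{\abs{a_0}^2+\abs{a_1}^2-1}$, and then $s=\pm\sqrt{\abs{c}^2-d^2/2}$, whence $\omega=(d\pm i\sqrt{2\abs{c}^2-d^2})/(\sqrt{2}\,c)$, which is (\ref{oh}); the identity $\abs{\omega}^2=(d^2+2\abs{c}^2-d^2)/(2\abs{c}^2)=1$ confirms that these are indeed (and exhaust) the unimodular solutions.

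Assembling the pieces: if $W(A)$ has a flat portion then (\ref{may136}) must have a unimodular root $\omega$, the inequality $\abs{a_3-a_0\overline{a_1}}\ge \frac{1}{\sqrt{2}}\abs{\abs{a_0}^2+\abs{a_1}^2-1}$ must hold, and — unless (\ref{tau4}) holds — this $\omega$ must be one of the (at most two) numbers listed in (\ref{oh}). The slope statement is immediate from the last sentence of Theorem~\ref{th:nec} with $n=4$: the potential flat portion passes through $\overline{\omega}\cos\frac{\pi}{4}$ and has slope $\frac{\pi}{2}-\arg\omega$. I expect no genuine obstacle here — the corollary is only a necessary condition, so the compression test of Theorem~\ref{th:cri} is not needed — and the only point requiring a little care is the degenerate case $c=0$, where (\ref{oh}) is meaningless and must be replaced by the tautology (\ref{tau4}).
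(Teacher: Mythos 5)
Your proposal is correct and follows essentially the same route as the paper: Corollary~\ref{co:f4} is obtained there exactly as you describe, by specializing Theorem~\ref{th:nec} to $n=4$ through the equivalence of (\ref{cond1}), (\ref{cond2}) with (\ref{may136}), (\ref{may141}), and then solving $\sqrt{2}\,\Re\bigl((a_3-a_0\overline{a_1})\omega\bigr)=\abs{a_0}^2+\abs{a_1}^2-1$ on the unit circle, with the degenerate case (\ref{tau4}) treated separately and the slope read off from the last sentence of Theorem~\ref{th:nec}. Your explicit check that the values (\ref{oh}) are unimodular and exhaust the solutions is precisely the computation the paper leaves implicit.
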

This result is instrumental in establishing a peculiar gap in the
number of possible flat portions for $4\times 4$ companion matrices.
\begin{theorem}\label{th:no3}There are no $4\times 4$ companion matrices $A$ with $f(A)=3$.
\end{theorem}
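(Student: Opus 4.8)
The plan is to argue by contradiction: suppose $f(A)=3$ for some $4\times 4$ companion matrix $A$. First I would dispose of the unitarily reducible case. By the result quoted in the Introduction (Theorem~37 of \cite{BS041}), a unitarily irreducible $4\times 4$ matrix has $f(A)\le 3$, and companion matrices with $f(A)=4$ were classified in \cite{GauWu07} and shown to be unitarily reducible; so I must rule out $f(A)=3$ for unitarily reducible companion $A$ separately. By Lemma~\ref{l:redu}, such an $A$ is unitarily similar to $A_1\oplus A_2$ where the blocks are either $1+3$ or $2+2$. In the $1+3$ case, $W(A)$ is the convex hull of a point and the numerical range of a $3\times 3$ companion-type block; a point adds at most $2$ flat portions to a convex region (the two ``tangent'' segments), and a $3\times 3$ unitarily irreducible block contributes at most one flat portion of its own, but one then has to check these cannot all three survive simultaneously on $\partial W(A)$ — this requires a short geometric argument about when the point lies outside $W(A_2)$ and the interplay with $A_2$'s own flat portion. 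In the $2+2$ case each block has an elliptical (or degenerate) numerical range, and the convex hull of two ellipses/segments has at most $2$ flat portions unless one is a segment forcing normality; again a direct check. Alternatively — and more cleanly — I would note that if $A$ is unitarily reducible with $f(A)=3$ then, since the only companion matrices attaining the combinatorial maximum are classified, a counting/continuity argument against $f(A)=3$ can be extracted; but I expect the honest route is the case analysis just sketched.

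The heart of the theorem is the unitarily irreducible case, and here I would use Corollary~\ref{co:f4} together with Theorem~\ref{th:cri}. Each flat portion corresponds to a unimodular solution $\omega$ of the system (\ref{may136}), (\ref{may141}) that in addition passes the ``if and only if'' test of Theorem~\ref{th:cri}. Equation (\ref{may136}) is a polynomial equation in $\omega$ of degree $4$ (in $\omega$, with $\overline\omega$ eliminated via $|\omega|=1$), so it has at most $4$ roots; I want to show that at most $2$ of them can be unimodular \emph{and} solve (\ref{may141}) \emph{and} pass the Theorem~\ref{th:cri} test — or rather, that the configuration ``exactly $3$'' is impossible. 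The key structural fact is that when (\ref{tau4}) fails, Corollary~\ref{co:f4} pins the admissible $\omega$ to the \emph{two} explicit values in (\ref{oh}): a quadratic gives at most two roots, hence $f(A)\le 2$ in that case, which already handles most matrices. So $f(A)=3$ is only possible when (\ref{tau4}) holds, i.e.\ $a_3=a_0\overline{a_1}$ and $|a_0|^2+|a_1|^2=1$; then (\ref{may141}) is vacuous and \emph{every} unimodular root of (\ref{may136}) is a candidate.

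Thus the decisive sub-case is: $a_3=a_0\overline{a_1}$, $|a_0|^2+|a_1|^2=1$, and (\ref{may136}) has at least $3$ unimodular roots. Here I would exploit the algebra of (\ref{may136}): writing $p(\omega)=a_0\omega^4+\sqrt2\,a_1\omega^3+a_2\omega^2-1$, if $\omega_1,\omega_2,\omega_3$ are unimodular roots then the constant and leading terms force $|a_0|\,|\omega_1\omega_2\omega_3\omega_4|=1$ and $\omega_1\omega_2\omega_3\omega_4=1/a_0$, so $|a_0|=1$; combined with $|a_0|^2+|a_1|^2=1$ this forces $a_1=0$, hence also $a_3=0$. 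So $A$ has $a_1=a_3=0$, $|a_0|=1$, and (\ref{may136}) becomes $a_0\omega^4+a_2\omega^2=1$, a \emph{quadratic in $\omega^2$} — which has at most two roots $\omega^2$, hence at most $\dots$ well, up to four roots $\omega$, so this alone does not finish it. At this point the surviving possibility is a matrix of the form $\mathrm{diag}$-type data $a_1=a_3=0$, $|a_0|=1$; but such an $A$ with $a_1=a_3=0$ is readily checked to be unitarily reducible via Lemma~\ref{l:redu} (its spectrum, the roots of $\lambda^4+a_2\lambda^2+a_0$, splits into a pair and its ``$\overline\eta^{-1}\omega_j$'' reciprocal-conjugate image whenever $|a_0|=1$, because the product of all roots has modulus one), contradicting irreducibility. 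This last reduction — showing $a_1=a_3=0,\ |a_0|=1$ forces unitary reducibility, or else fails the Theorem~\ref{th:cri} test on one of its roots — is the step I expect to be the main obstacle, since it is exactly where one must combine the Gau--Wu reducibility criterion (Lemma~\ref{l:redu}) with a careful count of which roots genuinely produce flat portions via Theorem~\ref{th:cri} rather than merely satisfying the necessary conditions.
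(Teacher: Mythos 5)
Your overall strategy coincides with the paper's (split into reducible/irreducible, use Corollary~\ref{co:f4} to force (\ref{tau4}), then Vieta on the unimodular roots of (\ref{may136})), but the two places you yourself flag as unfinished are exactly where the proof lives, and as stated your claims there do not hold. First, in the reducible $1\oplus 3$ case no ``short geometric argument'' can work: the convex hull of a point and a convex region having one flat side can perfectly well have three flat portions (the original one plus the two tangent segments from the point), so geometry alone does not exclude $f(A)=3$. The paper rules this out by citing a specific fact about companion matrices, namely \cite[Theorem 2.5]{Gau10}: the $3\times 3$ block arising from a reducible companion matrix has \emph{no} flat portion on its boundary, which leaves only $f(A)=0,2$. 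Second, your Vieta step is flawed: from $\omega_1\omega_2\omega_3\omega_4=\pm 1/a_0$ you conclude $|a_0|=1$, but the fourth root $\omega_4$ is not known to be unimodular, so this does not follow. The paper instead exploits the \emph{missing linear term} of (\ref{may136}): the reciprocals of the roots sum to zero, so $\omega_4=-1/\overline{z}$ with $z=u+v+w$, whence $|a_0|=|z|$ and $|a_1|=\bigl||z|^2-1\bigr|/\sqrt 2$; only after combining with $|a_0|^2+|a_1|^2=1$ does one get $|z|=1$, hence $a_1=0$ (and then $a_3=0$). You also omit the easy subcase $a_0=0$, where the cubic Vieta product $uvw=1/(\sqrt2\,a_1)$ with $|a_1|=1$ already gives a contradiction.

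Third, and most seriously, your closing claim --- that $a_1=a_3=0$, $|a_0|=1$ forces unitary reducibility via Lemma~\ref{l:redu} because ``the product of all roots has modulus one'' --- is false. The spectrum is $\{\pm\lambda_1,\pm\lambda_2\}$ with $|\lambda_1\lambda_2|=1$, but Lemma~\ref{l:redu} requires it to have the very rigid form $\{\eta\omega_j\}\cup\{\overline{\eta}^{-1}\omega_j\}$ with $\omega_j$ fourth roots of unity; with $a_0=1$ and a generic $a_2$ this fails, so such matrices can be unitarily irreducible. The missing ingredient is precisely the information carried by the four unimodular roots $\pm u,\pm v$ of (\ref{may136}): Vieta gives $a_0=-u^{-2}v^{-2}$, $a_2=u^{-2}+v^{-2}$, from which the paper shows $x=-2a_0/a_2^2=1/\bigl(1+\Re(u/v)^2\bigr)>0$ and hence that $\lambda_1^2/\lambda_2^2$ is a negative real number; thus the eigenvalues lie at the vertices of a rhombus centered at the origin, and \emph{that} spectral configuration (perpendicular diagonals together with $|\lambda_1\lambda_2|=1$) is what triggers reducibility by \cite{GauWu04}, contradicting irreducibility. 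Without this computation your argument does not close, so the proposal has genuine gaps at both the reducible $1\oplus 3$ case and the final step of the irreducible case.
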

\begin{proof}Let us first address the case when $A$ is unitarily reducible. According to Lemma~\ref{l:redu},
it is then
either unitary,  with the eigenvalues located in the vertices of a square
centered at the origin (in which case $f(A)=4$), or is unitarily similar to
the orthogonal sum of two unitarily irreducible blocks. If these blocks
are both $2\times 2$, then $W(A)$ is the convex hull of two ellipses
--- the construction that can a priori have 0, 2, or 4 flat portions (though the case
$f(A)=4$ does not materialize, as shown in \cite{GauWu07}) but not 1
or 3. Finally, if $A$ reduces to the orthogonal sum of a $1\times 1$ and
$3\times 3$ block, then the numerical range of the latter has no flat
portions on the boundary according to \cite[Theorem 2.5]{Gau10},
which leaves only options $f(A)=0,2$ possible.

Now let $A$ be unitarily irreducible. Applying Corollary~\ref{co:f4} we
see that $f(A)=3$ is only possible when (\ref{tau4}) holds and,
moreover, (\ref{may136}) has at least three distinct unimodular
solutions, say $u,v$ and $w$. We consider separately the cases
$a_0=0$ and $a_0\neq 0$.

{\sl Case 1.} $a_0=0$. The second equality in (\ref{tau4}) then implies
that $\abs{a_1}=1$. On the other hand, equation (\ref{may136}) in this
case has degree 3, and therefore $u,v,w$ are all its roots. By the Vieta
theorem, \[ uvw=\frac{1}{\sqrt{2}a_1},\] which is in contradiction with
the unimodularity of $u,v,w$.

{\sl Case 2.} $a_0\neq 0$. Then (\ref{may136}) has the fourth root, also
different from zero. Since the linear term  is missing in (\ref{may136}),
the inverses of the roots have zero sum. In other words, the fourth root
is \[ -\frac{1}{u^{-1}+v^{-1}+w^{-1}}=-1/\overline{z},\] where we have
denoted $z:=u+v+w$. Other  parts of the Vieta theorem mean that
\[ -uvw/\overline{z}=-1/a_0,\quad
z-1/\overline{z}=-\frac{\sqrt{2}a_1}{a_0}.\] Taking absolute values, we
obtain \[ \abs{a_0}=\abs{z}, \quad
\abs{a_1}=\abs{\abs{z}^2-1}/\sqrt{2}.\] When combined with the
second equality in (\ref{tau4}), this implies $\abs{z}=1$. Consequently,
$a_1=0$. Equation (\ref{may136}) is therefore biquadratic,  its roots
come in opposite pairs, and without loss of generality may be
relabeled as $\pm u, \pm v$.  By the same Vieta theorem,  \eq{uv}
a_0=-u^{-2}v^{-2}, \quad a_2= u^{-2}+v^{-2}. \en

While we have established that conditions of Corollary~\ref{co:f4} hold
for four distinct unimodular values of $\omega$, this does not
necessarily mean that four flat portions actually materialize. So, further
reasoning is needed in order to arrive at a contradiction. The first
equality in (\ref{tau4}) and the equality $a_1=0$ (proven earlier) imply
that $a_3=0$ as well. So, the characteristic polynomial (\ref{char}) in
our case also is biquadratic, and the eigenvalues of $A$ equal
$\pm\lambda_1,\pm\lambda_2$ with $\lambda_1^2,\lambda_2^2$
being the roots of the quadratic equation \[ \mu^2+a_2\mu+a_0=0. \]
The ratio of these roots is obviously a negative real number when
$a_2=0$. Supposing $a_2\neq 0$, on the other hand, we obtain
\begin{multline}\label{rat} \frac{\lambda_1^2}{\lambda_2^2}=
\frac{-a_2+\sqrt{a_2^2-4a_0}}{-a_2-\sqrt{a_2^2-4a_0}}=
\frac{\left(-a_2+\sqrt{a_2^2-4a_0}\right)^2}{4a_0} \\
=\frac{2a_2^2-4a_0-2a_2\sqrt{a_2^2-4a_0}}{4a_0}=-1+\frac{a_2^2-a_2\sqrt{a_2^2-4a_0}}{2a_0}\\
= -1+\frac{1-\sqrt{1-\frac{4a_0}{a_2^2}}}{2\frac{a_0}{a_2^2}}
=-1+\frac{\sqrt{1+2\left(\frac{-2a_0}{a_2^2}\right)}-1}{-2\frac{a_0}{a_2^2}}=
-1+\frac{\sqrt{1+2x}-1}{x},
\end{multline}  where $x=-2a_0/a_2^2$.  Using (\ref{uv}), \[  x=
\frac{2u^{-2}v^{-2}}{(u^{-2}+v^{-2})^2}=\frac{1}{1+\Re(u/v)^2}, \] and is
therefore a positive real number. Since for all such $x$,
$\sqrt{1+2x}<1+x$, expression (\ref{rat}) is again negative. So, the
eigenvalues $\pm\lambda_1, \pm\lambda_2$ of $A$ are located at
the vertices of a rhombus centered at the origin. According to
\cite{GauWu04}, this implies unitary reducibility of $A$ --- a
contradiction. Therefore, $f(A)=3$ is an impossibility in this case as
well, which concludes the proof.
\end{proof}

{\sl Example 2.}
We provide an explicit example of when $A$ is a
unitarily irreducible $4 \times 4$ companion matrix and $f(A)=2$. \\
Let $a_0=\frac{9+12i}{25}$, $a_1=\frac{2\sqrt{2}(7+i)}{25}$,
$a_2=-\frac{4(3+4i)}{25}$, and $a_3=\frac{6\sqrt{2}(1+i)}{25}$ so that
we have: \eq{flat2}
A=\left[\begin{matrix} 0 & 1 & 0 & 0 \\ 0 & 0 & 1 & 0 \\ 0 & 0 & 0 & 1 \\
-\frac{9+12i}{25} & -\frac{2\sqrt{2}(7+i)}{25} & \frac{4(3+4i)}{25} &
-\frac{6\sqrt{2}(1+i)}{25}
\end{matrix}\right]. \en
The eigenvalues of $A$, $\{0.6413 + 0.8475i, 0.6264 - 0.5578i, -1.0468 - 0.4290i, -0.5603 - 0.2000i\}$,
each have a different magnitude and therefore (Lemma~\ref{l:redu} again)
$A$ is not unitarily reducible. We also see that for the matrix $A$ given by (\ref{flat2}) that (\ref{tau4}) holds and (\ref{may136}) has two unimodular solutions,
$\omega_{1}=1$ and $\omega_{2}=i$, and two non-unimodular solutions, $\omega_{3}=-2+i$ and $\omega_{4}=\displaystyle -\frac{1}{3} - \frac{2i}{3}$.
Moreover, for $A_{1}=\omega_{1} A =A$, $\Re A_{1}$ has two linearly independent
eigenvectors, $f_{1}=\displaystyle \left[ \frac{\sqrt{2}}{2}, 1,
\frac{\sqrt{2}}{2}, 0 \right]^{T}$ and $f_{2}=\displaystyle \left[
\frac{\sqrt{2}(-23+14i)}{25}, \frac{-37+16i}{25}, 0, 1 \right]^{T}, $
corresponding to the maximal eigenvalue of $\displaystyle
\frac{\sqrt{2}}{2}$. Computing the scalar product $\scal{(\Im A_{1})f_{1}, f_{2}}=\displaystyle
\frac{\sqrt{2}(-7+24i)}{25} \neq 0$ we see that indeed $W(A)$
has a vertical flat portion on the boundary.
Similarly, for
$A_{2}=\omega_{2} A = iA$, $\Re A_{2}$ has two linearly independent
eigenvectors, $g_{1}=\displaystyle \left[ -1, \sqrt{2}i, 1, 0 \right]^{T}$
and $g_{2}=\displaystyle \left[ \frac{\sqrt{2}(-2+11i)}{25},
\frac{13+16i}{25}, 0, 1 \right]^{T}$, corresponding to the maximal
eigenvalue of $\displaystyle \frac{\sqrt{2}}{2}$, while $\scal{(\Im A_{2})g_{1},g_{2}} = \displaystyle
\frac{36-2i}{25}\neq 0$. Therefore,  $W(A)$ also has a
horizontal flat portion on its boundary.

Thus, the matrix $A$ given by (\ref{flat2}) has two flat portions on the
boundary of $W(A)$, as shown in Figure 2.

\begin{figure}[here]
\centering
\includegraphics[scale=.35]{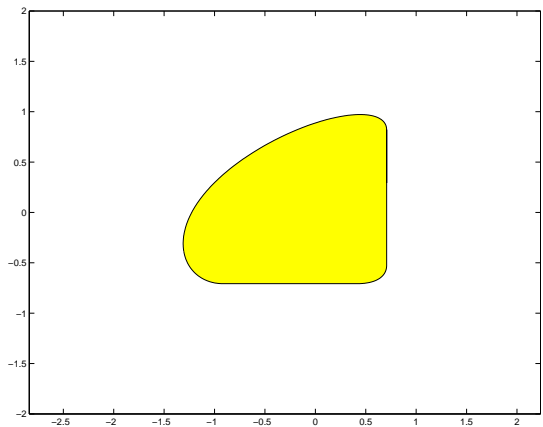}
\caption{Numerically calculated plot of the numerical range of $A$, as given by
(\ref{flat2}).}
\end{figure}

\newpage

{\sl Example 3.}We provide an explicit example of when $A$ is a
unitarily irreducible $4 \times 4$ companion matrix and $f(A)=1$. Let
$a_0=0$, $a_1=1$, $a_2=1-\sqrt{2}$, and $a_3=0$ so that we have: \eq{flat1}
A=\left[\begin{matrix} 0 & 1 & 0 & 0 \\ 0 & 0 & 1 & 0 \\ 0 & 0 & 0 & 1 \\
0 & -1 & -1+\sqrt{2} & 0
\end{matrix}\right]. \en Then (\ref{tau4}) holds and (\ref{may136}) has only one unimodular solution:
$\omega_{1}=1$. Moreover, for $A_{1}=\omega_{1} A =A$, $\Re A_{1}$
has two linearly independent eigenvectors, $f_{1}= [1, \sqrt{2}, 1, 0]^{T}$ and $f_{2}=[-1, -\sqrt{2}, 0, 1]^{T}$,
corresponding to the maximal eigenvalue of $\sqrt{2}/2$. Computing the
scalar product $\scal{(\Im A_{1})
f_{1}, f_{2}} = \displaystyle \frac{2+\sqrt{2}}{2} i$ we see indeed that $W(A)$ has a vertical flat
portion on the boundary.

Thus, the matrix $A$ given by (\ref{flat1}) has one flat portion on the
boundary of $W(A)$, as shown in Figure 3.

\begin{figure}[here]
\centering
\includegraphics[scale=.35]{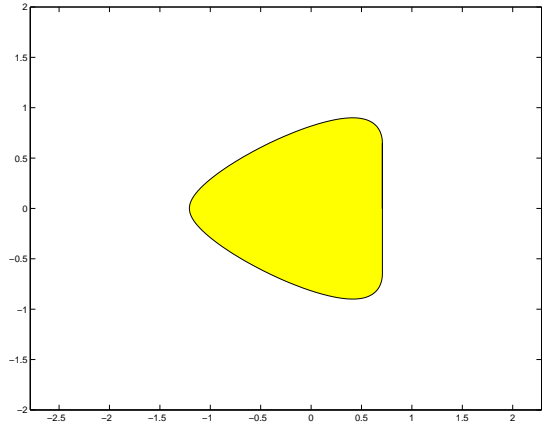}
\caption{Numerically calculated plot of the numerical range of $A$, as given by
(\ref{flat1}).}
\end{figure}

\newpage
Note that having exactly one flat portion on $W(A)$ implies unitary irreducibility of the matrix (\ref{flat1}), as was shown in the proof of Theorem~\ref{th:no3} (see the first paragraph there).
%\bigskip

Finally, let us provide an example of a $4\times 4$ matrix satisfying conditions of Corollary~\ref{c:zeros}, and thus unitarily irreducible with no flat portions on the boundary of its numerical range.

{\sl Example 4.}
Let \eq{flat0}
A=\left[\begin{matrix} 0 & 1 & 0 & 0 \\ 0 & 0 & 1 & 0 \\ 0 & 0 & 0 & 1 \\
0 & 0 & 0 & -2 \end{matrix}\right], \en
that is,
$a_0=a_1=a_2=0$ and $a_3=2$. The numerical range of this matrix is given in Figure 4.

\begin{figure}[here]
\centering
\includegraphics[scale=.35]{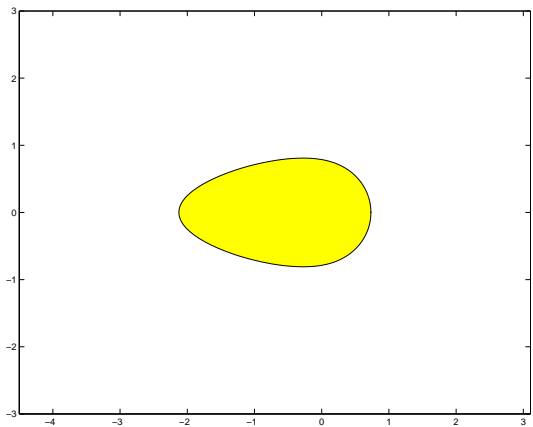}
\caption{Numerically calculated plot of the numerical range of $A$, as given by (\ref{flat0}).}
\end{figure}

\newpage
Gathering information from Corollary~\ref{c:zeros} (or Example 4), Theorem~\ref{th:no3} and
Examples 2--3 we arrive to our final conclusion.
\begin{theorem}For a $4\times 4$ unitarily irreducible companion matrix $A$ the complete list
of admissible values of $f(A)$ is $\{0,1,2\}$.\end{theorem}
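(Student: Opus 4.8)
The plan is to assemble the final theorem entirely from results already proved in the paper, so the proof is essentially a bookkeeping argument showing that each of $0,1,2$ occurs and that nothing larger does. First I would invoke Theorem~\ref{th:no3} to rule out $f(A)=3$, and recall from the introduction (via \cite[Theorem~37]{BS041}) that for unitarily irreducible $4\times4$ matrices $f(A)\le 3$; combining these two facts immediately gives $f(A)\le 2$ for unitarily irreducible companion matrices. It is worth noting here that the only matrices achieving $f(A)=4$ among $4\times4$ companions are unitary (hence reducible) by Lemma~\ref{l:redu}, so there is no tension with that bound.

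Next I would establish that each of the three values is actually attained by a \emph{unitarily irreducible} companion matrix. For $f(A)=2$, point to Example~2: the eigenvalues listed there have pairwise distinct moduli, so by Lemma~\ref{l:redu} the matrix is unitarily irreducible, and the direct computation of the two nonzero scalar products $\scal{(\Im A_1)f_1,f_2}$ and $\scal{(\Im A_2)g_1,g_2}$ together with Theorem~\ref{th:cri} confirms two flat portions. For $f(A)=1$, use Example~3, together with the remark following it that a matrix with exactly one flat portion must be unitarily irreducible (this follows from the first paragraph of the proof of Theorem~\ref{th:no3}, since a reducible $4\times4$ companion yields $f(A)\in\{0,2,4\}$). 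For $f(A)=0$, use Corollary~\ref{c:zeros} (equivalently Example~4): any companion matrix with $a_0=\cdots=a_{n-2}=0$ is unitarily irreducible and has no flat portions.

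Putting these together: $f(A)\in\{0,1,2\}$ by the upper bound, and all three values occur, so the list $\{0,1,2\}$ is exactly the set of admissible values. The argument is almost purely referential, so there is no serious technical obstacle; the only thing to be careful about is the logical dependency in the $f(A)=1$ case, namely that one genuinely needs the reducibility dichotomy from Theorem~\ref{th:no3}'s proof (reducible $\Rightarrow$ $f\in\{0,2,4\}$) to conclude that Example~3 is irreducible rather than invoking Lemma~\ref{l:redu} directly on eigenvalue moduli — indeed, the matrix in Example~3 is nilpotent-like with $a_0=0$, so its spectrum includes $0$ and one should not rely on distinctness of moduli there. The cleanest presentation is therefore to state the upper bound first, then handle $f=0,1,2$ in that order, citing Corollary~\ref{c:zeros}, Example~3 (with the reducibility remark), and Example~2 respectively.
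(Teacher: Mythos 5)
Your proposal is correct and matches the paper's own argument, which likewise just assembles Theorem~\ref{th:no3} (no $f(A)=3$), the bound $f(A)\le 3$ for unitarily irreducible $4\times 4$ matrices from \cite{BS041}, and the attainability of $0,1,2$ via Corollary~\ref{c:zeros} (Example~4), Example~3 with the irreducibility remark following it, and Example~2. Your extra care about why Example~3 is irreducible is well placed (one could equally note that $a_0=0$ makes the matrix singular, hence irreducible directly by Lemma~\ref{l:redu}, as in Corollary~\ref{c:zeros}), but this is only a presentational difference.
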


\iffalse {\sl Acknowledgment.} Large part of JE's research was done while undergraduate at the
College of William and Mary. His research was supported by The James Monroe Scholars Program. \fi

%\bibliographystyle{amsplain}
%\bibliography{master}

\begin{thebibliography}{10}

\bibitem{BGru}
A.~B{\"o}ttcher and S.~M. Grudsky, \emph{Spectral properties of banded
  {T}oeplitz matrices}, SIAM, Philadelphia, 2005.

\bibitem{BS041}
E.~Brown and I.~Spitkovsky, \emph{On flat portions on the boundary of the
  numerical range}, Linear Algebra Appl. \textbf{390} (2004), 75--109.

\bibitem{Calb08}
W.~Calbeck, \emph{Elliptic numerical ranges of {$3\times3$} companion
  matrices}, Linear Algebra Appl. \textbf{428} (2008), no.~11-12, 2715--2722.

\bibitem{Gau10}
H.-L. Gau, \emph{Numerical ranges of reducible companion matrices}, Linear
  Algebra Appl. \textbf{432} (2010), no.~5, 1310--1321.

\bibitem{GauWu04}
H.-L. Gau and P.~Y. Wu, \emph{Companion matrices: reducibility, numerical
  ranges and similarity to contractions}, Linear Algebra Appl. \textbf{383}
  (2004), 127--142.

\bibitem{GauWu07}
\bysame, \emph{Numerical ranges of companion matrices}, Linear Algebra Appl.
  \textbf{421} (2007), no.~2-3, 202--218.

\bibitem{GusRa}
K.~E. Gustafson and D.~K.~M. Rao, \emph{Numerical range. {T}he field of values
  of linear operators and matrices}, Springer, New York, 1997.

\bibitem{HaHa92}
U.~Haagerup and P.~de~la Harpe, \emph{The numerical radius of a nilpotent
  operator on a {H}ilbert space}, Proc. Amer. Math. Soc. \textbf{115} (1992),
  no.~2, 371--379.

\bibitem{Hau}
F.~Hausdorff, \emph{Der {W}ertvorrat einer {B}ilinearform}, Math. Z. \textbf{3}
  (1919), 314--316.

\bibitem{HJ2}
R.~A. Horn and C.~R. Johnson, \emph{Topics in matrix analysis}, Cambridge
  University Press, Cambridge, 1991.

\bibitem{KRS}
D.~Keeler, L.~Rodman, and I.~Spitkovsky, \emph{The numerical range of $3 \times
  3$ matrices}, Linear Algebra Appl. \textbf{252} (1997), 115--139.

\bibitem{Ki}
R.~Kippenhahn, \emph{{\"{U}}ber den {W}ertevorrat einer {M}atrix}, Math. Nachr.
  \textbf{6} (1951), 193--228.

\bibitem{Ki08}
\bysame, \emph{On the numerical range of a matrix}, Linear Multilinear Algebra
  \textbf{56} (2008), no.~1-2, 185--225, Translated from the German by Paul F.
  Zachlin and Michiel E. Hochstenbach [MR0059242].

\bibitem{RS05}
L.~Rodman and I.~M. Spitkovsky, \emph{{$3\times3$} matrices with a flat portion
  on the boundary of the numerical range}, Linear Algebra Appl. \textbf{397}
  (2005), 193--207.

\bibitem{Toe18}
O.~Toeplitz, \emph{Das algebraische {A}nalogon zu einen {S}atz von {F}ej\'er},
  Math. Z. \textbf{2} (1918), 187--197.

\end{thebibliography}

\providecommand{\bysame}{\leavevmode\hbox to3em{\hrulefill}\thinspace}
\providecommand{\MR}{\relax\ifhmode\unskip\space\fi MR }
% \MRhref is called by the amsart/book/proc definition of \MR.
\providecommand{\MRhref}[2]{%
  \href{http://www.ams.org/mathscinet-getitem?mr=#1}{#2}
}
\providecommand{\href}[2]{#2}

\end{document}